\numberwithin{equation}{section}
\newtheorem{theorem}{Theorem}[section]
\newtheorem{proposition}[theorem]{Proposition}
\newtheorem{definition}[theorem]{Definition}
\newtheorem{lemma}[theorem]{Lemma}
\newtheorem{corollary}[theorem]{Corollary}
\newtheorem{theorem*}{Theorem}
\newcommand{\Tor}{\operatorname{Tor}}
\newcommand{\Mod}{\operatorname{Mod}}
\newcommand{\Hom}{\operatorname{Hom}}
\newcommand{\Ext}{\operatorname{Ext}}
\newcommand{\A}{\mathscr{A}}
\newcommand{\ra}{\rightarrow}
\newcommand{\C}{\mathscr{C}}
\newcommand{\res}{\operatorname{res}}
\newcommand{\cores}{\operatorname{cores}}
\def\Im{\mathop{\rm Im}\nolimits}
\def\Mod{\mathop{\rm Mod}\nolimits}
\def\Prod{\mathop{\rm Prod}\nolimits}
\title{ \bf Duality Pairs Induced by One-Sided Gorenstein Subcategories \thanks{2010 Mathematics Subject Classification: 18G25, 16E30.}
\thanks{Keywords: Duality pairs, Right Gorenstein subcategories, Left Gorenstein subcategories, $C$-Gorenstein flat modules,
$C$-Gorenstein injective modules, Auslander classes, Cotorsion pairs.
}}
\author{Weiling Song$^{a}$,
Tiwei Zhao$^{b}$, Zhaoyong Huang$^{c,}$\thanks{{\it E-mail address}: songwl@njfu.edu.cn (W. L. Song), tiweizhao@hotmail.com (T. W. Zhao),
huangzy@nju.edu.cn (Z. Y. Huang)}\\
{\footnotesize \it $^a$Department of Applied Mathematics, College of Science, Nanjing Forestry University,}\\
{\footnotesize \it Nanjing 210037, Jiangsu Province, P.R. China;}\\
{\footnotesize \it $^b$School of Mathematical Sciences, Qufu Normal University, Qufu 273165, Shandong Province, P.R. China;}\\
{\footnotesize \it $^c$Department of Mathematics, Nanjing University, Nanjing 210093, Jiangsu Province, P.R. China}}
\date{ }
\begin{document}

\baselineskip=16pt
\maketitle

\begin{abstract}
For a ring $R$ and an additive subcategory $\C$ of the category $\Mod R$ of left $R$-modules,
under some conditions we prove that the right Gorenstein subcategory of $\Mod R$ and the left
Gorenstein subcategory of $\Mod R^{op}$ relative to $\C$ form a coproduct-closed duality pair.
Let $R,S$ be rings and $C$ a semidualizing ($R,S$)-bimodule. As applications of the above result,
we get that if $S$ is right coherent and $C$ is faithfully semidualizing,
then $(\mathcal{GF}_C(R),\mathcal{GI}_C(R^{op}))$
is a coproduct-closed duality pair and $\mathcal{GF}_C(R)$ is covering in $\Mod R$, where
$\mathcal{G}\mathcal{F}_C(R)$ is the subcategory of $\Mod R$
consisting of $C$-Gorenstein flat modules and $\mathcal{G}\mathcal{I}_C(R^{op})$
is the subcategory of $\Mod R^{op}$ consisting of $C$-Gorenstein injective modules;
we also get that if $S$ is right coherent, then $(\mathcal{A}_C(R^{op}),l\mathcal{G}(\mathcal{F}_C(R)))$
is a coproduct-closed and product-closed duality pair and $\mathcal{A}_C(R^{op})$ is covering and preenveloping
in $\Mod R^{op}$, where $\mathcal{A}_C(R^{op})$ is the Auslander class in $\Mod R^{op}$
and $l\mathcal{G}(\mathcal{F}_C(R))$ is the left Gorenstein subcategory of $\Mod R$ relative to
$C$-flat modules.
\end{abstract}

\pagestyle{myheadings}
\markboth{\rightline {\scriptsize W. L. Song, T. W. Zhao, Z. Y. Huang}}
         {\leftline{\scriptsize  Duality Pairs Induced by One-Sided Gorenstein Subcategories}}

\section{Introduction} 

In relative homological algebra, the Gorenstein homological theory and the theory of covers and envelopes
have always been in the central, and their interplay has been researched extensively over the years.

In the Gorenstein homological theory, the properties of Gorenstein projective, injective and flat modules
and related modules are fundamental and important, see \cite{AB,EJ95,EJ00}.
For an abelian category $\A$ and an additive subcategory $\C$ of $\A$, as a common generalization
of Gorenstein projective and injective modules, Sather-Wagstaff, Sharif and White introduced in \cite{SSW} the
Gorenstein subcategory $\mathcal{G}(\C)$ of $\A$ relative to $\C$. It was proved that Gorenstein subcategories
unify many nice properties of Gorenstein projective modules and Gorenstein injective modules, see \cite{GD11,Hu,SSW}.
From the definition of the Gorenstein subcategory $\mathcal{G}(\C)$,
it is known that $\C$ should be a generator and a cogenerator for $\mathcal{G}(\C)$ simultaneously
and both functors $\Hom_\A(\C,-)$ and $\Hom_\A(-,\C)$ should possess certain exactness. These assumptions seem to be
strong to some extent. In \cite{SZ}, by modifying the definition of Gorenstein subcategories,
it was introduced the so-called right Gorenstein subcategories and left Gorenstein subcategories, such that
for a self-orthogonal subcategory $\C$ of $\A$, the Gorenstein subcategory $\mathcal{G}(\C)$
coincides with the intersection of the left and the right Gorenstein subcategories.

In the theory of covers and envelopes, given a subcategory, we always hope to know whether or when it
is (pre)covering or (pre)enveloping. This problem has been studied in depth,
see \cite{BR}, \cite{E1}--\cite{EO02}, \cite{GT12}--\cite{K}, \cite{X} and references therein.
Holm and J{\o}rgensen introduced the notion of duality pairs and proved the following remarkable result.
Let $R$ be an arbitrary ring and let $\mathscr{X}$ and $\mathscr{Y}$ be subcategories of $\Mod R$ and $\Mod R^{op}$ respectively.
If $(\mathscr{X},\mathscr{Y})$ is a duality pair, then the following assertions hold true:
(1) If $\mathscr{X}$ is closed under products, then $\mathscr{X}$ is preenveloping;
(2) if $\mathscr{X}$ is closed under coproducts, then $\mathscr{X}$ is covering; and
(3) if $_RR\in\mathscr{X}$ and $\mathscr{X}$ is closed under coproducts and extensions,
then $(\mathscr{X},\mathscr{X}^{\perp})$ is a perfect cotorsion pair
(\cite[Theorem 3.1]{HJ09}). By using it, they generalized a result in \cite{EH} about the covering and enveloping properties
of the Auslander and Bass classes in $\Mod R$ to the bounded derived category of $\Mod R$. Then, also by using this result of Holm and J{\o}rgensen,
Enochs and Iacob investigated in \cite{EI} the existence of Gorenstein injective envelopes over commutative noetherian rings.

Let $\C$ and $\mathscr{D}$ be subcategories of $\Mod R$ and $\Mod R^{op}$
respectively, and let $r\mathcal{G}(\C)$ and $l\mathcal{G}(\mathscr{D})$ be the corresponding right and left
Gorenstein subcategories respectively. In this paper, we will study when the pair $(r\mathcal{G}(\C),l\mathcal{G}(\mathscr{D}))$
is a duality pair in terms of the properties of $\C$ and $\mathscr{D}$. Then, combining with the result
of Holm and J{\o}rgensen mentioned above, we give some applications. The paper is organized as follows.

In Section 2, we give some terminology and notations.

Let $R$ be an arbitrary associative ring with identity and
$(-)^+:=\Hom_{\mathbb{Z}}(-,\mathbb{Q}/\mathbb{Z})$, where $\mathbb{Z}$ is the additive group of integers and $\mathbb{Q}$
is the additive group of rational numbers. For a class $\mathscr{X}$ of $R$-modules,
we write $\mathscr{X}^+:=\{X^+\mid X\in\mathscr{X}\}$.
In Section 3, we prove that if $\C$ and $\mathscr{D}$ are subcategories of $\Mod R$ and $\Mod R^{op}$
respectively satisfying the following conditions: (1) $\mathscr{D}^+\subseteq \C$ and $\C^+\subseteq\mathscr{D}$;
(2) $\C$ is preenveloping in $\Mod R$ and all modules in $\C$ are pure injective; and
(3) $\Ext_{R^{op}}^{\geq 1}(D,C^+)=0$ for any $D\in\mathscr{D}$ and $C\in\C$, then the pair $(r\mathcal{G}(\C),l\mathcal{G}(\mathscr{D}))$
is a duality pair. Furthermore, if $r\mathcal{G}(\C)$ is closed under coproducts (resp. products),
then $(r\mathcal{G}(\C),l\mathcal{G}(\mathscr{D}))$ is coproduct-closed (resp. product-closed)
and $r\mathcal{G}(\C)$ is covering (resp. preenveloping) in $\Mod R$ (Theorem \ref{3.5}).

In Section 4, we give two applications of Theorem \ref{3.5}.
Let $R,S$ be rings and $C$ a semidualizing ($R,S$)-bimodule. When $S$ is right coherent, we have
$$\mathcal{GF}_C(R)=r\mathcal{G}(\mathcal{F}_C(R)\cap\mathcal{PI}(R)),$$
where $\mathcal{GF}_C(R)$ is the subcategory of $\Mod R$ consisting of $C$-Gorenstein flat modules,
$\mathcal{F}_C(R)$ is the subcategory of $\Mod R$ consisting of $C$-flat modules and
$\mathcal{PI}(R)$ is the subcategory of $\Mod R$ consisting of pure injective modules (Theorem \ref{4.6}).
Then by using it, we give the first application of Theorem \ref{3.5} as follows: If $S$ is right coherent
and $C$ is faithfully semidualizing, then
$$(\mathcal{GF}_C(R),\mathcal{GI}_C(R^{op}))$$
is a coproduct-closed duality pair and $\mathcal{GF}_C(R)$ is covering in $\Mod R$, where
$\mathcal{G}\mathcal{I}_C(R^{op})$
is the subcategory of $\Mod R^{op}$ consisting of $C$-Gorenstein injective modules (Theorem \ref{4.8}).
As a consequence, we get that if $S$ is right coherent and ${_RC_S}$ is faithfully semidualizing,
then $(\mathcal{GF}_C(R),\mathcal{GF}_C(R)^{\bot})$ is a hereditary perfect cotorsion pair in $\Mod R$
(Corollary \ref{4.9}). It generalizes \cite[Theorem 2.12]{EJL}.

We observe that the Auslander class $\mathcal{A}_C(R^{op})$ coincides with the right Gorenstein subcategory
of $\Mod R^{op}$ relative to $C$-injective modules  (Lemma \ref{4.14}). According to this observation,
we apply Theorem \ref{3.5} to prove that if $S$ is right coherent, then
$$(\mathcal{A}_C(R^{op}),l\mathcal{G}(\mathcal{F}_C(R)))$$
is a coproduct-closed and product-closed duality pair and $\mathcal{A}_C(R^{op})$ is covering and preenveloping
in $\Mod R^{op}$, where $l\mathcal{G}(\mathcal{F}_C(R))$ is the left Gorenstein subcategory of $\Mod R$ relative to
$C$-flat modules (Theorem \ref{4.15}). Then, as a generalization of \cite[Theorem 3.11]{EH},
we get that if $S$ is right coherent, then
$$(\mathcal{A}_C(R^{op}),\mathcal{A}_C(R^{op})^{\bot})$$ is a hereditary perfect cotorsion pair
in $\Mod R^{op}$ (Corollary \ref{4.16}).

\section{Preliminaries}

In this section, $\A$ is an abelian category and all subcategories of $\A$ are full and closed under isomorphisms.
For a subcategory $\mathscr{X}$ of $\A$, we write
$${^\perp{\mathscr{X}}}:=\{A\in\A\mid\operatorname{Ext}^{\geq 1}_{\A}(A,X)=0 \mbox{ for any}\ X\in \mathscr{X}\},\
{{\mathscr{X}}^\perp}:=\{A\in\A\mid\operatorname{Ext}^{\geq 1}_{\A}(X,A)=0 \mbox{ for any}\ X\in \mathscr{X}\},$$
$${^{\perp_1}{\mathscr{X}}}:=\{A\in\A\mid\operatorname{Ext}^{1}_{\A}(A,X)=0 \mbox{ for any}\ X\in \mathscr{X}\},\
{{\mathscr{X}}^{\perp_1}}:=\{A\in\A\mid\operatorname{Ext}^{1}_{\A}(X,A)=0 \mbox{ for any}\ X\in \mathscr{X}\}.$$
For subcategories $\mathscr{X},\mathscr{Y}$ of $\A$, we write $\mathscr{X}\perp\mathscr{Y}$ if
$\operatorname{Ext}^{\geq 1}_{\A}(X,Y)=0$ for any $X\in \mathscr{X}$ and $Y\in \mathscr{Y}$.
Let $\C\subseteq \mathscr{X}$ be subcategories of $\A$. We say that $\C$ is a {\bf cogenerator} for $\mathscr{X}$
if for any $X\in \mathscr{X}$ there exists an exact sequence
$$0\ra X \ra C\ra X^{'}\ra 0$$ in $\A$ with $C\in \C$ and $X^{'}\in \mathscr{X}$; and we say that $\C$ is an
{\bf injective cogenerator} for $\mathscr{X}$ if $\C$ is a cogenerator for $\mathscr{X}$ and $\mathscr{X}\perp \C$.

A sequence $\mathbb{E}$ in $\A$ is called {\bf $\Hom_{\A}(\mathscr{X},-)$-exact}
(resp. {\bf $\Hom_{\A}(-,\mathscr{X})$-exact}) if
$\Hom_{\A}(X,\mathbb{E})$ (resp. $\Hom_{\A}(\mathbb{E},X)$) is exact for any $X\in \mathscr{X}$.
Following \cite{SSW}, we write
$\res \widetilde{{\mathcal{X}}}:=\{A\in\A\mid$ there exists a $\Hom_{\A}(\mathscr{X},-)$-exact exact sequence
$$\cdots\to X_i \to \cdots \to X_1 \to X_i \to A \to 0$$
in $\A$ with all $X_i$ in $\mathscr{X}\}$, and
$\cores \widetilde{\mathcal{X}}:=\{A\in\A\mid$ there exists a $\Hom_{\A}(-,\mathscr{X})$-exact exact sequence
$$0\to A\to X^0 \to X^1 \to \cdots \to X^i \to \cdots$$
in $\A$ with all $X^i$ in $\mathscr{X}\}$.

\begin{definition}\label{2.1}
{\rm (\cite{SSW})
Let $\C$ be a subcategory of $\A$. The {\bf Gorenstein subcategory}
$\mathcal{G}(\mathscr{C})$ of $\mathscr{A}$ (relative to $\C$) is defined as
$\{G\in\mathscr{A}\mid$ there exists a $\Hom_{\mathscr{A}}(\mathscr{C},-)$-exact and
$\Hom_{\mathscr{A}}(-,\mathscr{C})$-exact exact sequence
$$\cdots \to C_1 \to C_0 \to C^0 \to C^1 \to \cdots$$ in $\mathscr{A}$
with all $C_i,C^i$ in $\C$, such that $G\cong \Im(C_0\to C^0)\}$.}
\end{definition}

The Gorenstein subcategory unifies the following notions: modules of
Gorenstein dimension zero (\cite{AB}), Gorenstein projective modules,
Gorenstein injective modules (\cite{EJ95}), $V$-Gorenstein projective
modules, $V$-Gorenstein injective modules (\cite{EJL05}),
$\mathscr{W}$-Gorenstein modules (\cite{GD11}), and so on; see \cite{Hu} for the details.

Let $\C$ be an additive category of $\A$. Following \cite[Lemma 5.7]{Hu},
if $\C\bot\C$, then the Gorenstein subcategory
$$\mathcal{G}(\C)=({^\perp\mathscr{C}}\cap \cores\widetilde{\mathscr{C}})\cap
({\mathscr{C}^\perp}\cap \res\widetilde{\mathscr{C}}).$$
Motivated by this fact, it was introduced in \cite{SZ} the following

\begin{definition}\label{2.2}
We call $$r\mathcal{G}(\C):={^\perp\mathscr{C}}\cap \cores\widetilde{\mathscr{C}}\
(resp.\ l\mathcal{G}(\C):={\mathscr{C}^\perp}\cap \res\widetilde{\mathscr{C}})$$ the {\bf right}
(resp. {\bf left}) {\bf Gorenstein subcategory} of $\A$ (relative to $\C$).
\end{definition}

When $\C\bot\C$, we have $\mathcal{G}(\C)=r\mathcal{G}(\C)\cap l\mathcal{G}(\C)$.

\begin{definition} \label{2.3}
{\rm (\cite{E1,EJ00})
Let $\mathscr{X}\subseteq\mathscr{Y}$ be
subcategories of $\mathscr{A}$. The morphism $f: X\to Y$ in
$\mathscr{A}$ with $X\in\mathscr{X}$ and $Y\in \mathscr{Y}$ is called an {\bf $\mathscr{X}$-precover} of $Y$
if $\Hom_{\mathscr{A}}(X^{'},f)$ is epic for any $X^{'}\in\mathscr{X}$.
An {\bf $\mathscr{X}$-precover} $f: X\to Y$ is called an {\bf $\mathscr{X}$-cover}
if an endomorphism $h:X\to X$ is an automorphism whenever $f=fh$. The subcategory $\mathscr{X}$ is called {\bf (pre)covering}
in $\mathscr{Y}$ if any object in $\mathscr{Y}$ admits an $\mathscr{X}$-(pre)cover.
Dually, the notions of an {\bf $\mathscr{X}$-(pre)envelope} and a {\bf (pre)enveloping subcategory} are defined.}
\end{definition}

\begin{definition}\label{2.4}
{\rm (\cite{EJ00, EO02})
Let $\mathscr{U},\mathscr{V}$ be subcategories of $\A$.
\begin{enumerate}
\item[(1)] The pair $(\mathscr{U},\mathscr{V})$ is called a {\bf cotorsion pair} in $\A$ if
$\mathscr{U}={^{\bot_1}\mathscr{V}}$ and $\mathscr{V}={\mathscr{U}^{\bot_1}}$.
\item[(2)] A cotorsion pair $(\mathscr{U},\mathscr{V})$ is called {\bf perfect} if $\mathscr{U}$ is covering and $\mathscr{V}$
is enveloping in $\A$.
\item[(3)] Assume that $\A$ has enough projectives and enough injectives. A cotorsion pair $(\mathscr{U},\mathscr{V})$ is called
{\bf hereditary} if one of the following equivalent
conditions is satisfied.
\begin{enumerate}
\item[(3.1)] $\mathscr{U}\perp \mathscr{V}$.
\item[(3.2)] $\mathscr{U}$ is resolving in the sense that $\mathscr{U}$ contains all projectives
in $\A$, $\mathscr{U}$ is closed under extensions and kernels of epimorphisms.
\item[(3.3)] $\mathscr{V}$ is coresolving in the sense that $\mathscr{V}$ contains all injectives
in $\A$, $\mathscr{V}$ is closed under extensions and cokernels of monomorphisms.
\end{enumerate}
\end{enumerate}}
\end{definition}

\section{General results}

In this section, $R$ is an arbitrary associative ring with identity and $\Mod R$ is the category of left $R$-modules.
All subcategories of $\Mod R$ and $\Mod R^{op}$ are additive, full and closed under isomorphisms.

Recall that a short exact sequence in $\Mod R$ is called {\bf pure} if the functor $\Hom_R(M,-)$
preserves its exactness for any finitely presented left
$R$-module $M$; and a module $E\in \Mod R$ is called {\bf pure injective} if the functor $\Hom _R(-,E)$ preserves
the exactness of a short pure exact sequence in $\Mod R$ (cf. \cite{GT12,K}). We use $\mathcal{PI}(R)$ to denote
the subcategory of $\Mod R$ consisting of pure injective modules.

Let $\mathscr{D}$ be a subcategory of $\Mod R^{op}$. A sequence $\mathbb{E}$ in $\Mod R$ is called
{\bf $(\mathscr{D}\otimes_R-)$-exact} if $D\otimes_R\mathbb{E}$ is exact for any $D\in \mathscr{D}$. We write
$${\mathscr{D}^\top}:=\{M\in\Mod R\mid\Tor_{\geq 1}^R(D,M)=0 \mbox{ for any }D\in \mathscr{D}\}.$$

\begin{lemma}\label{3.1}
Let $\C$ and $\mathscr{D}$ be subcategories of $\Mod R$ and $\Mod R^{op}$
respectively, such that $\mathscr{D}^+\subseteq \C$. For a module $A\in\Mod R$, consider the following conditions.
\begin{enumerate}
\item[(1)] $A\in r\mathcal{G}(\C)$.
\item[(2)] $A\in {\mathscr{D}^\top}$ and there exists a $(\mathscr{D}\otimes_R-)$-exact exact sequence
$$0\ra A\ra C^0\ra C^1\ra \cdots\ra C^i\ra \cdots$$ in $\Mod R$ with all $C^i\in \C$.
\end{enumerate}
We have $(1)\Rightarrow (2)$. The converse holds true if $\C^+\subseteq\mathscr{D}$
and all modules in $\C$ are pure injective (that is, $\C\subseteq\mathcal{PI}(R)$).
\end{lemma}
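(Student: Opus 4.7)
The proof splits naturally along the two implications, and the engine in both directions is the interplay between the character functor $(-)^+$ and the Hom/tensor adjunction, together with the standard duality $\Ext^i_R(M,D^+)\cong \Tor_i^R(D,M)^+$ and the fact that $(-)^+$ is faithfully exact over $\mathbb{Z}$.

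For $(1)\Rightarrow(2)$, I would argue as follows. Given any $D\in\mathscr{D}$, the hypothesis $\mathscr{D}^+\subseteq\C$ places $D^+$ in $\C$, so $\Ext^{\geq 1}_R(A,D^+)=0$ by the assumption $A\in{}^{\perp}\C$. Through the Ext--Tor duality this forces $\Tor^R_{\geq 1}(D,A)^+=0$, and since $(-)^+$ is faithful we get $A\in\mathscr{D}^{\top}$. For the coresolution, take the $\Hom_R(-,\C)$-exact sequence $\mathbb{E}\colon 0\to A\to C^0\to C^1\to\cdots$ coming from $A\in\cores\widetilde{\C}$. Applying $\Hom_R(-,D^+)$ keeps it exact, but the natural adjunction isomorphism $\Hom_R(-,D^+)\cong(D\otimes_R-)^+$ identifies this with $(D\otimes_R\mathbb{E})^+$; faithful exactness of $(-)^+$ then gives that $D\otimes_R\mathbb{E}$ itself is exact, i.e. $\mathbb{E}$ is $(\mathscr{D}\otimes_R-)$-exact.

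For $(2)\Rightarrow(1)$ the extra hypotheses $\C^+\subseteq\mathscr{D}$ and $\C\subseteq\mathcal{PI}(R)$ enter in exactly one crucial way: pure injectivity of $C\in\C$ means the canonical pure monomorphism $C\hookrightarrow C^{++}$ splits, so $C$ is a direct summand of $C^{++}$. Thus to check $\Ext^{\geq 1}_R(A,C)=0$ and to check $\Hom_R(-,C)$-exactness of the coresolution, it suffices to replace $C$ by $C^{++}$. Now $C^+\in\mathscr{D}$, and so
\[
\Ext^i_R(A,C^{++})=\Ext^i_R(A,(C^+)^+)\cong \Tor_i^R(C^+,A)^+=0
\]
for $i\geq 1$ by $A\in\mathscr{D}^{\top}$, giving $A\in{}^{\perp}\C$. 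For the coresolution, apply $\Hom_R(-,C^{++})\cong(C^+\otimes_R-)^+$ to the sequence $\mathbb{E}$ from (2); since $\mathbb{E}$ is $(\mathscr{D}\otimes_R-)$-exact and $C^+\in\mathscr{D}$, the sequence $C^+\otimes_R\mathbb{E}$ is exact, hence so is its character dual, proving $\mathbb{E}$ is $\Hom_R(-,\C)$-exact. Combined, $A\in{}^{\perp}\C\cap\cores\widetilde{\C}=r\mathcal{G}(\C)$.

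The expected bottleneck is the converse direction, specifically justifying the reduction from a general $C\in\C$ to its double character module $C^{++}$. Everything else is a mechanical unravelling of adjunction and faithful exactness of $(-)^+$; what makes this lemma work, and what explains the role of the pure-injectivity hypothesis, is the splitting of the evaluation map $C\to C^{++}$ for $C$ pure injective. I would therefore make sure to isolate this splitting as a clean auxiliary step before running the Ext and Hom computations above.
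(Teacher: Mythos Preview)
Your proposal is correct and follows essentially the same approach as the paper's proof: both directions hinge on the adjunction $\Hom_R(-,D^+)\cong (D\otimes_R-)^+$, the duality $\Ext^i_R(A,D^+)\cong \Tor_i^R(D,A)^+$, the faithful exactness of $(-)^+$, and, for the converse, the splitting of $C\hookrightarrow C^{++}$ when $C$ is pure injective. Your identification of this splitting as the key auxiliary step matches exactly how the paper organizes the argument.
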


\begin{proof}
$(1)\Rightarrow (2)$
Let $A\in r\mathcal{G}(\C)$. Then $A\in{^\bot\C}$ and there exists a $\Hom_{R}(-,\C)$-exact exact sequence
$$0 \rightarrow A \rightarrow C^0\rightarrow C^1\rightarrow \cdots\to C^i \to \cdots$$
in $\Mod R$ with all $C^i\in \C$. Let $D\in \mathscr{D}$. By assumption $D^+\in \C$ and hence the sequence
$$\cdots \to \Hom_{R}(C^i,D^+)\ra\cdots\ra \Hom_{R}(C^1,D^+)\ra \Hom_{R}(C^0,D^+)\ra \Hom_{R}(A,D^+)\ra 0$$ is exact.
By the adjoint isomorphism, we get the following exact sequence
$$\cdots\ra(D\otimes_RC^i)^+\ra \cdots\ra (D\otimes_RC^1)^+\ra (D\otimes_RC^0)^+\ra (D\otimes_RA)^+\ra 0,$$
which yields that the sequence
$$0\ra D\otimes_RA\ra D\otimes_RC^0\ra D\otimes_RC^1\ra\cdots\ra D\otimes_RC^i\ra\cdots$$
is also exact. On the other hand, by \cite[Lemma 2.16(b)]{GT12}
we have $[\Tor_i^R(D,A)]^+\cong \Ext^i_R(A,D^+)=0$ for any $i\geq 1$. So $\Tor_{\geq 1}^R(D,A)=0$ and $A\in {\mathscr{D}^\top}$.

Now assume that $\C^+\subseteq\mathscr{D}$ and all modules in $\C$ are pure injective. We will prove $(2)\Rightarrow (1)$.

Let $C\in\C$. Then $C^+\in \mathscr{D}$ and $C^{++}\in \C$. By \cite[Lemma 2.16(b)]{GT12} and (2),
we have $\Ext^i_R(A,C^{++})\cong[\Tor_i^R(C^+,A)]^+=0$ for any $i\geq 1$. By \cite[Theorem 2.27]{GT12}, we have that
$C$ is isomorphic to a direct summand of $C^{++}$. So $\Ext^{\geq 1}_R(A,C)=0$ and $A\in{^\perp\C}$.

By (2), we have the following exact sequence
$$0\ra C^+\otimes_RA\ra C^+\otimes_RC^0\ra C^+\otimes_RC^1\ra \cdots\ra C^+\otimes_RC^i\ra \cdots$$
with all $C^i\in \C$, which yields that the sequence
$$\cdots\ra (C^+\otimes_RC^i)^+\ra\cdots\ra (C^+\otimes_RC^1)^+\ra (C^+\otimes_RC^0)^+\ra (C^+\otimes_RA)^+\ra 0$$ is exact.
By the adjoint isomorphism, the sequence
$$\cdots\ra \Hom_{R}(C^i,C^{++})\ra\cdots\ra \Hom_{R}(C^1,C^{++})\ra \Hom_{R}(C^0,C^{++})\ra \Hom_{R}(A,C^{++})\ra 0$$ is also exact.
Because $C$ is isomorphic to a direct summand of $C^{++}$, we get the following exact sequence
$$\cdots\ra \Hom_{R}(C^i,C)\ra\cdots\ra \Hom_{R}(C^1,C)\ra \Hom_{R}(C^0,C)\ra \Hom_{R}(A,C)\ra 0.$$ Thus $A\in r\mathcal{G}(\C)$.
\end{proof}

By using Lemma \ref{3.1}, we have the following

\begin{proposition}\label{3.2}
Let $\C$ and $\mathscr{D}$ be subcategories of $\Mod R$ and $\Mod R^{op}$
respectively satisfying the following conditions.
\begin{enumerate}
\item[(a)] $\mathscr{D}^+\subseteq \C$ and $\C^+\subseteq\mathscr{D}$.
\item[(b)] $\C$ is preenveloping in $\Mod R$ and all modules in $\C$ are pure injective.
\item[(c)] $\mathscr{D}\bot\C^+$ (in particular, it is satisfied if $\mathscr{D}$ is self-orthogonal).
\end{enumerate}
Then the following are equivalent for any $A\in\Mod R$.
\begin{enumerate}
\item[(1)] $A\in r\mathcal{G}(\C)$.
\item[(2)] $A^+\in l\mathcal{G}(\mathscr{D})$.
\end{enumerate}
\end{proposition}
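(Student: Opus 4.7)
The plan is to route both directions through Lemma~\ref{3.1}, transporting its equivalent condition~(2) across the duality $(-)^+$ via the Ext-Tor isomorphism $[\Tor_i^R(D,A)]^+\cong\Ext^i_{R^{op}}(D,A^+)$ and the tensor-hom adjunction $\Hom_{R^{op}}(D,M^+)\cong(D\otimes_R M)^+$.

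For $(1)\Rightarrow(2)$, Lemma~\ref{3.1} furnishes $A\in\mathscr{D}^\top$ and a $(\mathscr{D}\otimes_R-)$-exact coresolution $0\to A\to C^0\to C^1\to\cdots$ with $C^i\in\C$. Since $(-)^+$ is exact and (a) places each $(C^i)^+$ in $\mathscr{D}$, the adjunction converts the $(\mathscr{D}\otimes_R-)$-exactness into $\Hom_{R^{op}}(\mathscr{D},-)$-exactness of $\cdots\to(C^1)^+\to(C^0)^+\to A^+\to 0$; combined with $A^+\in\mathscr{D}^\perp$ (from the Ext-Tor isomorphism and $A\in\mathscr{D}^\top$), this yields $A^+\in l\mathcal{G}(\mathscr{D})$.

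For the converse $(2)\Rightarrow(1)$, $A\in\mathscr{D}^\top$ is immediate, so it remains to build a $(\mathscr{D}\otimes_R-)$-exact $\C$-coresolution of $A$; I construct it inductively. At stage $i$, assume $A_i^+\in l\mathcal{G}(\mathscr{D})$ (start $A_0=A$) and fix a $\Hom(\mathscr{D},-)$-exact $\mathscr{D}$-resolution $\cdots\to D_1\to D_0\xrightarrow{\varphi}A_i^+\to 0$. Dualizing $\varphi$ produces $A_i^{++}\hookrightarrow D_0^+\in\C$; composing with the canonical $\iota_{A_i}$ gives a monomorphism $A_i\hookrightarrow D_0^+$, and any $\C$-preenvelope $f^i:A_i\to C^i$ (available by~(b)) must factor this as $A_i\xrightarrow{f^i}C^i\xrightarrow{\beta}D_0^+$, forcing $f^i$ to be monic. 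Set $A_{i+1}:=\Coker f^i$. By adjunction, $(\mathscr{D}\otimes_R-)$-exactness of $0\to A_i\to C^i\to A_{i+1}\to 0$ is equivalent to $\Hom_{R^{op}}(\mathscr{D},-)$-exactness of the dualized SES $0\to A_{i+1}^+\to(C^i)^+\to A_i^+\to 0$. Given $g:D\to A_i^+$ with $D\in\mathscr{D}$, factor $g=\varphi\circ h$ through $D_0$ by $\Hom(\mathscr{D},-)$-exactness of the resolution, and define $\tilde\beta:=\beta^+\circ\iota_{D_0}:D_0\to(C^i)^+$; then $(f^i)^+\circ\tilde\beta=\iota_{A_i}^+\circ\iota_{A_i^+}\circ\varphi=\varphi$ since $A_i^+$ is pure injective (so $\iota_{A_i}^+\circ\iota_{A_i^+}=\mathrm{id}_{A_i^+}$), whence $\tilde\beta\circ h$ lifts $g$.

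To keep the induction alive, I need $A_{i+1}^+\in l\mathcal{G}(\mathscr{D})$. The $\mathscr{D}^\perp$-part follows from the long Ext sequence of the dualized SES: (c) kills $\Ext^{\geq 1}_{R^{op}}(\mathscr{D},(C^i)^+)$ because $(C^i)^+\in\C^+$, and the $\Hom$-surjectivity above handles the $\Ext^1$ term. For the $\Hom(\mathscr{D},-)$-exact $\mathscr{D}$-resolution of $A_{i+1}^+$, I would invoke the resolving property of $l\mathcal{G}(\mathscr{D})$ under kernels of $\Hom(\mathscr{D},-)$-exact epimorphisms with middle in $\mathscr{D}$ (as developed in \cite{SZ}): forming the pullback of $(C^i)^+\to A_i^+$ along $D_0\to A_i^+$, and splicing with the given resolution of $A_i^+$ after splitting off $D_0$ via $\Ext^1(D_0,A_{i+1}^+)=0$, produces the required resolution. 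Lemma~\ref{3.1} then delivers $A\in r\mathcal{G}(\C)$. The main obstacle is the lifting-compatibility identity $(f^i)^+\circ\tilde\beta=\varphi$, which is the single place where the $\C$-preenvelope property, the naturality of biduality, and the pure injectivity of character modules must all cooperate; once it is in place, condition~(c) is precisely what breaks the Ext-circularity at each inductive step.
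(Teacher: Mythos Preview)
Your argument is correct and follows essentially the same architecture as the paper: both directions are routed through Lemma~\ref{3.1}, and for $(2)\Rightarrow(1)$ you build the $\C$-coresolution inductively by taking $\C$-preenvelopes, check that they are monic via the embedding $A_i\hookrightarrow D_0^+$, and maintain $A_{i+1}^+\in l\mathcal{G}(\mathscr{D})$ by the resolving property from \cite{SZ} combined with condition~(c).

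The one place where you make your life harder than necessary is the verification that $\Hom_{R^{op}}(D,(f^i)^+)$ is surjective. The paper observes this in one line: since $D^+\in\C$ by~(a) and $f^i$ is a $\C$-preenvelope, $\Hom_R(f^i,D^+)$ is epic, and the natural isomorphism $\Hom_R(X,D^+)\cong\Hom_{R^{op}}(D,X^+)$ transfers this directly to $\Hom_{R^{op}}(D,(f^i)^+)$. Your detour through the fixed resolution $D_0\to A_i^+$ and the explicit lift $\tilde\beta$ works, but it is not needed. Incidentally, the identity $\iota_{A_i}^+\circ\iota_{A_i^+}=\mathrm{id}_{A_i^+}$ that you invoke is a general triangle identity valid for any module~$A_i$; it does not require pure injectivity of $A_i^+$.
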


\begin{proof}
$(1)\Rightarrow (2)$ Let $A\in r\mathcal{G}(\C)$. Then by \cite[Lemma 2.16(b)]{GT12} and Lemma \ref{3.1},
$\Ext^i_R(D,A^+)\cong[\Tor_i^R(D,A)]^+=0$ for any $D\in\mathscr{D}$ and $i\geq 1$, and there exists a
$(\mathscr{D}\otimes_R-)$-exact exact sequence
$$0\ra A\ra C^0\ra C^1\ra \cdots\ra C^i\ra \cdots$$
in $\Mod R$ with all $C^i\in \C$. It induces an exact sequence
$$\cdots\ra {C^i}^+\ra\cdots\ra {C^1}^+\ra {C^0}^+\ra A^+\ra 0\eqno{(3.1)}$$
in $\Mod R^{op}$ with all ${C^i}^+\in\mathscr{D}$. On the other hand, since the sequence
$$0\ra D\otimes_RA\ra D\otimes_RC^0\ra D\otimes_RC^1\ra \cdots\ra D\otimes_RC^i\ra \cdots$$ is exact, the sequence
$$\cdots\ra (D\otimes_RC^i)^+\ra\cdots\ra (D\otimes_RC^1)^+\ra (D\otimes_RC^0)^+\ra (D\otimes_RA)^+\ra 0$$ is exact.
By the adjoint isomorphism, the sequence
$$\cdots\ra \Hom_R(D,{C^i}^+)\ra\cdots\ra \Hom_R(D,{C^1}^+)\ra \Hom_R(D,{C^0}^+)\ra \Hom_R(D,{A}^+)\ra 0$$ is also exact,
which shows that the exact sequence $(3.1)$ is $\Hom_R(\mathscr{D},-)$-exact. Thus $A^+\in l\mathcal{G}(\mathscr{D})$.

$(2)\Rightarrow (1)$ Let $A^+\in l\mathcal{G}(\mathscr{D})$. Then there exists an exact sequence
$$0\ra L\ra D_0\ra A^+\ra 0$$ in $\Mod R^{op}$ with $D_0\in\mathscr{D}$. It induces an exact sequence
$$0\ra A^{++}\ra D_0^+\ra L^+\ra 0$$ in $\Mod R$ with $D_0^+\in \C$. Then by \cite[Corollary 2.21]{GT12}, we get a monomorphism
$A\rightarrowtail D_0^+$ in $\Mod R$. Because $\C$ is preenveloping in $\Mod R$, we have a $\Hom_R(-,\C)$-exact exact sequence
$$0\ra A\buildrel {f^0} \over \longrightarrow C^0\ra A^1\ra 0\eqno{(3.2)}$$ in $\Mod R$ with $C^0\in\C$.
Because $\C^+\subseteq\mathscr{D}$ and $\mathscr{D}\bot\C^+$, we have ${C^0}^+\in\mathscr{D}\cap\mathscr{D}^{\bot}$,
and so ${C^0}^+\in l\mathcal{G}(\mathscr{D})$.

We claim that ${A^1}^+\in l\mathcal{G}(\mathscr{D})$. Firstly, we have an exact sequence
$$0\ra {A^1}^+\ra {C^0}^+\buildrel {f^0}^+ \over \longrightarrow A^+\ra 0\eqno{(3.3)}$$
in $\Mod R^{op}$ with ${C^0}^+\in \mathscr{D}$.
Let $D\in\mathscr{D}$. Then $D^+\in\C$ and $\Hom_R({f^0},D^{+})$ is epic. For any $X\in\Mod R$ and $Y\in\Mod R^{op}$,
it follows from the adjoint isomorphism theorem that there exist the following natural isomorphisms
$$\Hom_{R^{op}}(Y,{X}^+)\cong(Y\otimes_RX)^+\cong \Hom_R({X},Y^+).$$
So we have the following commutative diagram
\begin{gather*}
\begin{split}
\xymatrix{
& \Hom_{R^{op}}(D,{C^0}^+) \ar[rr]^{\Hom_{R^{op}}(D,{f^0}^{+})} \ar [d]^\cong  && \Hom_{R^{op}}(D, A^+) \ar [d]^\cong \\
& \Hom_R({C^0},D^+) \ar[rr]^{\Hom_R({f^0},D^{+})}  && \Hom_R(A,D^+),}
\end{split}
\end{gather*}
and hence $\Hom_{R^{op}}(D,{f^0}^{+})$ is epic. Because $\mathscr{D}\bot\C^+$
and the exact sequence (3.2) induces the following exact sequence
$$\Hom_{R^{op}}(D,{C^0}^+)\buildrel \Hom_{R^{op}}(D,{f^0}^{+}) \over \longrightarrow \Hom_{R^{op}}(D,A^+)
\ra\Ext^1_{R^{op}}(D,{A^1}^+)\ra\Ext^1_{R^{op}}(D,{C^0}^+)=0,$$ we have $\Ext^1_R(D,{A^1}^+)=0$.
Applying the dual of \cite[Proposition 3.4(2)]{SZ} to the exact sequence (3.3),
we have ${A^1}^+\in l\mathcal{G}(\mathscr{D})$. The claim is proved.

Similarly, we get a $\Hom_R(-,\C)$-exact exact sequence
$$0\ra A^1\buildrel {f^1} \over \longrightarrow C^1\ra A^2\ra 0$$
in $\Mod R$ with $C^1\in\C$ and ${A^2}^+\in l\mathcal{G}(\mathscr{D})$.
Continuing this process, we get a $\Hom_R(-,\C)$-exact exact sequence
$$0\ra A\buildrel {f^0} \over \longrightarrow C^0\buildrel {f^1} \over \longrightarrow C^1
\ra \cdots\buildrel {f^i} \over \longrightarrow C^i\to \cdots$$
in $\Mod R$ with all $C^i$ in $\C$. On the other hand, because
$A^+\in l\mathcal{G}(\mathscr{D})$, we have $[\Tor_i^R(D,A)]^+\cong\Ext^i_R(D,A^+)=0$ for any $i\geq 1$
by \cite[Lemma 2.16(b)]{GT12}. So $\Tor_{\geq 1}^R(D,A)=0$ and $A\in{\mathscr{D}^\top}$. It follows from
Lemma \ref{3.1} that $A\in r\mathcal{G}(\C)$.
\end{proof}

The following is the definition of duality pairs (cf. \cite{EI,HJ09}).

\begin{definition}\label{3.3}
{\rm Let $\mathscr{X}$ and $\mathscr{Y}$ be subcategories of $\Mod R$ and $\Mod R^{op}$ respectively.
\begin{enumerate}
\item[(1)] The pair ($\mathscr{X},\mathscr{Y}$) is called a {\bf duality pair} if the following conditions are satisfied.
\begin{enumerate}
\item[(1.1)] For a module $X\in\Mod R$, $X\in\mathscr{X}$  if and only if $X^{+}\in \mathscr{Y}$.
\item[(1.2)] $\mathscr{Y}$ is closed under direct summands and finite direct sums.
\end{enumerate}
\item[(2)] A duality pair ($\mathscr{X},\mathscr{Y}$) is called {\bf (co)product-closed} if $\mathscr{X}$ is closed under
(co)products.
\item[(3)] A duality pair ($\mathscr{X},\mathscr{Y}$) is called {\bf perfect} if it is coproduct-closed,
$_RR\in\mathscr{X}$ and $\mathscr{X}$ is closed under extensions.
\end{enumerate}}
\end{definition}

We also recall the following remarkable result.

\begin{lemma}\label{3.4}
{\rm (\cite[p.7, Theorem]{EI} and \cite[Theorem 3.1]{HJ09})}
Let $\mathscr{X}$ and $\mathscr{Y}$ be subcategories of $\Mod R$ and $\Mod R^{op}$ respectively.
If $(\mathscr{X},\mathscr{Y})$ is a duality pair, then the following assertions hold true.
\begin{enumerate}
\item[(1)] If $(\mathscr{X},\mathscr{Y})$ is coproduct-closed, then $\mathscr{X}$ is covering.
\item[(2)] If $(\mathscr{X},\mathscr{Y})$ is product-closed, then $\mathscr{X}$ is preenveloping.
\item[(3)] If $(\mathscr{X},\mathscr{Y})$ is perfect, then $(\mathscr{X},\mathscr{X}^{\perp})$ is a perfect cotorsion pair.
\end{enumerate}
\end{lemma}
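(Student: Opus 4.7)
The plan is to exploit the character duality $(-)^+\colon \Mod R\to\Mod R^{\op}$ to transfer classical purity and enveloping arguments from $\mathscr{Y}$ into $\mathscr{X}$. The overarching principle is that a class defined by a condition on $(-)^+$ is forced to behave like a definable class, so the standard covering/preenveloping machinery of Enochs and Eklof--Trlifaj applies.

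First I would establish the fundamental closure properties of $\mathscr{X}$ inherited from the duality pair, namely closure under pure submodules and pure quotients. The input is the classical fact that $0\to A\to B\to C\to 0$ is pure exact in $\Mod R$ if and only if $0\to C^+\to B^+\to A^+\to 0$ is split exact in $\Mod R^{\op}$. Assuming $B\in\mathscr{X}$, one has $B^+\in\mathscr{Y}$; since $\mathscr{Y}$ is closed under direct summands and finite direct sums, both $A^+$ and $C^+$ lie in $\mathscr{Y}$, whence $A,C\in\mathscr{X}$ by the biconditional (1.1) of Definition \ref{3.3}. In particular, if $\mathscr{X}$ is coproduct-closed, then it is closed under directed colimits, because any such colimit is a pure quotient of the coproduct of its terms.

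Next I would prove (1) and (2). For (1), coproduct closure combined with the directed-colimit closure established above puts $\mathscr{X}$ into the scope of El Bashir's (equivalently, Enochs') theorem: a class that is closed under coproducts and directed colimits and that admits a generating set is covering. The generating set is produced by a Löwenheim--Skolem / cardinality estimate on the lattice of pure subobjects, using Step 1: every $X\in\mathscr{X}$ is the directed union of its pure subobjects of cardinality below a fixed cardinal depending only on $|R|$, and all such subobjects lie in $\mathscr{X}$. For (2), product closure combined with pure-submodule closure lets one construct a preenvelope of an arbitrary $M\in\Mod R$ by factoring through the product, indexed by a sufficiently large cardinal, of all morphisms $M\to X$ with $X\in\mathscr{X}$ of bounded cardinality; again purity is what makes the test set small enough. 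For (3), assuming $_RR\in\mathscr{X}$ and that $\mathscr{X}$ is closed under coproducts and extensions, I would feed the generating set from (1) into the Eklof--Trlifaj small-object argument to obtain a complete cotorsion pair $(\mathscr{X},\mathscr{X}^{\perp})$; part (1) already supplies the covering half, and completeness plus covering yields that $\mathscr{X}^{\perp}$ is enveloping, giving perfectness.

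The main obstacle is the set-theoretic heart of Step 2: extracting from the purely categorical duality pair axioms a genuine generating set of bounded cardinality, i.e.\ turning the abstract closure properties of Step 1 into effective size control on pure subobjects and pure quotients. Once this Löwenheim--Skolem input is in place, the covering, preenveloping, and cotorsion-pair conclusions follow from standard machinery; but verifying the cardinal bound against the biconditional characterisation $X\in\mathscr{X}\Leftrightarrow X^+\in\mathscr{Y}$ is the technical core of the Holm--Jørgensen argument and is the step I would expect to occupy most of the effort.
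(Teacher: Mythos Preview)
The paper does not prove Lemma~\ref{3.4}; it merely recalls the result from \cite{EI} and \cite{HJ09}, so there is no in-paper argument to compare against. Your outline is correct and in fact matches the original Holm--J{\o}rgensen proof closely: Step~1 (closure of $\mathscr{X}$ under pure submodules and pure quotients via splitting of the character dual sequence) is exactly their opening observation, and parts (1)--(3) are then reduced to the standard theorems of El~Bashir/Enochs, Rada--Saor\'{i}n, and Eklof--Trlifaj respectively, just as you indicate.

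One small clarification on the obstacle you flag. The L\"owenheim--Skolem bound on pure submodules does not need to be ``verified against the biconditional $X\in\mathscr{X}\Leftrightarrow X^+\in\mathscr{Y}$''. Once Step~1 has established that $\mathscr{X}$ is closed under pure submodules, the existence of a small generating set follows from the general fact that every module is the directed union of its pure submodules of cardinality at most $\max(|R|,\aleph_0)$; this is internal to $\Mod R$ and makes no further reference to $\mathscr{Y}$ or to $(-)^+$. So the duality pair axioms are used only in Step~1, and the ``technical core'' you anticipate is in fact already discharged there---the rest is off-the-shelf approximation theory.
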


Now we are in a position to give the following

\begin{theorem}\label{3.5}
Under the assumptions in Proposition \ref{3.2}, the pair $$(r\mathcal{G}(\C),l\mathcal{G}(\mathscr{D}))$$
is a duality pair. Furthermore, if $r\mathcal{G}(\C)$ is closed under coproducts (resp. products),
then this duality pair is coproduct-closed (resp. product-closed)
and $r\mathcal{G}(\C)$ is covering (resp. preenveloping) in $\Mod R$.
\end{theorem}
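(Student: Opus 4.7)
The plan is to unpack Definition 3.3(1) into its two clauses, verify each directly, and then feed the resulting duality pair into Lemma \ref{3.4} to obtain the furthermore part.

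Clause (1.1) of Definition \ref{3.3}---that for any $A\in\Mod R$ one has $A\in r\mathcal{G}(\C)$ if and only if $A^+\in l\mathcal{G}(\mathscr{D})$---is, word for word, the content of Proposition \ref{3.2} under the standing assumptions. So this half is immediate and no extra work is needed. What remains for the duality-pair assertion is clause (1.2): $l\mathcal{G}(\mathscr{D})$ must be closed under finite direct sums and under direct summands in $\Mod R^{op}$.

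For the closure properties I would argue as follows. Closure under finite direct sums is essentially formal: additivity of $\Ext^{\geq 1}_{R^{op}}(D,-)$ gives $\mathscr{D}^\perp$-closure, while taking termwise direct sums of two $\Hom_{R^{op}}(\mathscr{D},-)$-exact left $\mathscr{D}$-resolutions produces such a resolution of $M_1\oplus M_2$, using that $\mathscr{D}$ is additive and that $\Hom(\mathscr{D},-)$ converts finite direct sums in its second variable into direct sums. For closure under direct summands, the $\mathscr{D}^\perp$-part is trivial by additivity of $\Ext$, and the $\res\widetilde{\mathscr{D}}$-part is the standard direct-summand closure of a left Gorenstein resolution subcategory, which is recorded in \cite{SZ} (indeed the proof of Proposition \ref{3.2} already appeals to a result of this flavour, namely the dual of \cite[Proposition 3.4(2)]{SZ}). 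Once (1.1) and (1.2) are in place, the pair $(r\mathcal{G}(\C), l\mathcal{G}(\mathscr{D}))$ qualifies as a duality pair by Definition \ref{3.3}.

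The furthermore clause is then a direct invocation of Lemma \ref{3.4}. If $r\mathcal{G}(\C)$ is closed under coproducts, Definition \ref{3.3}(2) makes the pair coproduct-closed and Lemma \ref{3.4}(1) yields that $r\mathcal{G}(\C)$ is covering in $\Mod R$; the product and preenveloping case is entirely symmetric via Lemma \ref{3.4}(2). The real labour of the argument is thus concentrated in Proposition \ref{3.2}; the only mildly delicate point that arises in the present theorem is the direct-summand closure of $l\mathcal{G}(\mathscr{D})$, which I would expect to be the main obstacle should one wish to avoid citing \cite{SZ}---in that case an Eilenberg-swindle-type argument would be needed, and one would first have to secure countable direct sum closure of $l\mathcal{G}(\mathscr{D})$.
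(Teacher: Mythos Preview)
Your proof is correct and follows essentially the same route as the paper: verify (1.1) via Proposition~\ref{3.2}, check (1.2) by closure of $l\mathcal{G}(\mathscr{D})$ under finite direct sums (from additivity of $\mathscr{D}$) and direct summands, and then invoke Lemma~\ref{3.4} for the furthermore clause. The only cosmetic difference is the citation for direct-summand closure: the paper appeals to \cite[Theorem 4.6(1)]{Hu} rather than \cite{SZ}, but the underlying argument is the same.
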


\begin{proof}
Because $\mathscr{D}$ is additive, $l\mathcal{G}(\mathscr{D})$ is closed under finite direct sums.
By \cite[Theorem 4.6(1)]{Hu}, $l\mathcal{G}(\mathscr{D})$ is closed under direct summands.
So $(r\mathcal{G}(\C),l\mathcal{G}(\mathscr{D}))$ is a duality pair by Propositions \ref{3.2}.
If $r\mathcal{G}(\C)$ is closed under coproducts (resp. products), then $(r\mathcal{G}(\C),l\mathcal{G}(\mathscr{D}))$
is coproduct-closed (resp. product-closed). It follows from Lemma \ref{3.4} that $r\mathcal{G}(\C)$
is covering (resp. preenveloping) in $\Mod R$.
\end{proof}

\section{Applications}

\subsection{The pair $(\mathcal{GF}_C(R),\mathcal{GI}_C(R^{op}))$}
In this subsection, we will give the first application of Theorem \ref{3.5}.

\begin{definition} \label{4.1}
{\rm (\cite{HW07}).
Let $R$ and $S$ be rings. An ($R,S$)-bimodule $_RC_S$ is called
{\bf semidualizing} if the following conditions are satisfied.
\begin{enumerate}
\item[(a1)] $_RC$ admits a degreewise finite $R$-projective resolution.
\item[(a2)] $C_S$ admits a degreewise finite $S$-projective resolution.
\item[(b1)] The homothety map $_RR_R\stackrel{_R\gamma}{\rightarrow} \Hom_{S^{op}}(C,C)$ is an isomorphism.
\item[(b2)] The homothety map $_SS_S\stackrel{\gamma_S}{\rightarrow} \Hom_{R}(C,C)$ is an isomorphism.
\item[(c1)] $\Ext_{R}^{\geqslant 1}(C,C)=0$.
\item[(c2)] $\Ext_{S^{op}}^{\geqslant 1}(C,C)=0$.
\end{enumerate}
A semidualizing bimodule $_RC_S$ is called {\bf faithful} if the following conditions are satisfied.
\begin{enumerate}
\item[(f1)] If $M\in \Mod R$ and $\Hom_R(C,M)=0$, then $M=0$.
\item[(f2)] If $N\in \Mod S^{op}$ and $\Hom_{S^{op}}(C,N)=0$, then $N=0$.
\end{enumerate}}
\end{definition}

Wakamatsu in \cite{W1} introduced and studied the so-called {\bf generalized tilting modules},
which are usually called {\bf Wakamatsu tilting modules}, see \cite{BR, MR}. Note that
a bimodule $_RC_S$ is semidualizing if and only if it is Wakamatsu tilting (\cite[Corollary 3.2]{W3}).
Examples of semidualizing bimodules are referred to \cite{HW07,W2}. In particular,
$_RR_R$ is a faithfully semidualizing bimodule; and all semidualizing modules over commutative rings are faithful
(\cite[Proposition 3.1]{HW07}).

From now on, $R$ and $S$ are arbitrary rings and we fix a semidualizing bimodule $_RC_S$.
We write $$\mathcal{P}_C(R):=\{C\otimes_SP\mid P\ {\rm \ is\ projective\ in}\ \Mod S\},$$
$$\mathcal{F}_C(R):=\{C\otimes_SP\mid P\ {\rm \ is\ flat\ in}\ \Mod S\},$$
$$\mathcal{I}_C(R^{op}):=\{\Hom_{S^{op}}(C,I)\mid I\ {\rm \ is\ injective\ in}\ \Mod S^{op}\}.$$
The modules in $\mathcal{P}_C(R)$, $\mathcal{F}_C(R)$ and $\mathcal{I}_C(R^{op})$ are called {\bf $C$-projective},
{\bf $C$-flat} and {\bf $C$-injective}, respectively.
When ${_RC_S}={_RR_R}$, $C$-projective, $C$-flat and $C$-injective modules are exactly
projective, flat and injective modules, respectively.


\begin{lemma}\label{4.2}
\begin{enumerate}
\item[]
\item[(1)] For a module $A\in\Mod R$, if $A\in\mathcal{F}_C(R)$, then $A^+\in\mathcal{I}_C(R^{op})$.
The converse holds true if $S$ is a right coherent ring and ${_RC_S}$ is faithfully semidualizing.
\item[(2)] Assume that $S$ is a right coherent ring. For a module $B\in\Mod R^{op}$, if $B\in\mathcal{I}_C(R^{op})$,
then $B^+\in\mathcal{F}_C(R)$. The converse holds true if $S$ is a right noetherian ring and ${_RC_S}$ is faithfully semidualizing.
\item[(3)] If $R$ is a right coherent ring, then $\mathcal{F}_C(R)^+\subseteq\mathcal{I}_C(R^{op})$
and $\mathcal{I}_C(R^{op})^+\subseteq \mathcal{F}_C(R)\cap\mathcal{PI}(R)$.
\end{enumerate}
\end{lemma}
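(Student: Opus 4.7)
The plan rests on two natural isomorphisms arising from the fact that the semidualizing bimodule $_RC_S$ has degreewise finite projective resolutions on both sides, so both $_RC$ and $C_S$ are finitely presented. First, the standard Hom-tensor adjunction yields, for every left $S$-module $P$,
\[
(C\otimes_S P)^+ \cong \Hom_{S^{op}}(C, P^+).
\]
Second, because $C_S$ is finitely presented, the natural map
\[
C\otimes_S M^+ \longrightarrow \Hom_{S^{op}}(C, M)^+
\]
is an isomorphism for every right $S$-module $M$; this is verified directly for finitely generated projective $C$ and then extended via a finite presentation $P_1\to P_0\to C\to 0$ and the five-lemma, using exactness of $(-)^+$. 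The mirror isomorphisms on the other side, coming from $_RC$ finitely presented, will also be needed.

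The forward directions of (1) and (2) fall out at once. For (1), if $A=C\otimes_S P$ with $P$ flat in $\Mod S$, the first isomorphism gives $A^+\cong\Hom_{S^{op}}(C,P^+)$; Lambek's criterion makes $P^+$ injective in $\Mod S^{op}$, so $A^+\in\mathcal{I}_C(R^{op})$. For (2), if $B=\Hom_{S^{op}}(C,I)$ with $I$ injective in $\Mod S^{op}$, the second isomorphism gives $B^+\cong C\otimes_S I^+$; under the right coherent hypothesis on $S$, $I$ is FP-injective, and by the classical Stenstr\"om--Lambek duality over coherent rings its character module $I^+$ is flat in $\Mod S$, so $B^+\in\mathcal{F}_C(R)$. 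Part (3) is then immediate from (1) and (2), using in addition the general fact that every character dual $(-)^+$ is pure injective.

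For the converses of (1) and (2), the strategy is to compute $A^{++}$ (resp.\ $B^{++}$) via the same isomorphisms to land in $\mathcal{F}_C(R)$ (resp.\ $\mathcal{I}_C(R^{op})$), and then descend through the pure monomorphism $A\hookrightarrow A^{++}$ using the faithfully semidualizing hypothesis. Concretely, given $A^+\cong\Hom_{S^{op}}(C,I)$, the second isomorphism yields $A^{++}\cong C\otimes_S I^+\in\mathcal{F}_C(R)$. Set $P:=\Hom_R(C,A)$; the mirror isomorphism $\Hom_R(C,A)^+\cong A^+\otimes_R C$, together with the Bass-class property of injective modules in $\Mod S^{op}$ (which delivers $A^+\otimes_R C\cong I$), shows that $P^+$ is injective in $\Mod S^{op}$, so $P$ is flat in $\Mod S$. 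One then verifies the Bass-class conditions on $A$: the $\Ext$-vanishing $\Ext^{\geq 1}_R(C,A)=0$ follows from the Ext/Tor duality $\Ext^i_R(C,A)^+\cong\Tor^R_i(A^+,C)$ (valid since $_RC$ is finitely presented) together with $A^{++}\in\mathcal{F}_C(R)$; the $\Tor$-vanishing is automatic from flatness of $P$; and the counit $C\otimes_S P\to A$ is shown to be an isomorphism by applying the faithfully exact functor $(-)^+$ and invoking the faithful hypothesis.

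The main obstacle is precisely this descent step. Without the faithful hypothesis one cannot pass from an isomorphism of character duals to an isomorphism of the original counit $C\otimes_S \Hom_R(C,A)\to A$, and the strengthened right noetherian hypothesis in the converse of (2) plays the analogous role on the right module side, where detecting injectivity of $\Hom_{R^{op}}(C,B)\in\Mod S^{op}$ via character duality requires more than mere coherence of $S$.
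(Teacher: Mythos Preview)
Your forward implications in (1) and (2), and your derivation of (3), coincide with the paper's proof (the paper cites \cite[Lemma 2.3]{TH3} for the forward directions, but the content is exactly the adjunction plus Lambek/coherent--duality argument you give, and (3) is deduced from these together with the pure injectivity of character modules).

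For the converses your high-level strategy---compute $A^{++}$ (resp.\ $B^{++}$), land it in $\mathcal{F}_C(R)$ (resp.\ $\mathcal{I}_C(R^{op})$), and descend through the pure embedding---is exactly the paper's. The execution differs: the paper does \emph{not} verify Bass-class membership of $A$ by hand as you propose; it simply invokes \cite[Lemma 5.2]{HW07}, which says that when $_RC_S$ is faithfully semidualizing the class $\mathcal{F}_C(R)$ (resp.\ $\mathcal{I}_C(R^{op})$) is closed under pure submodules. That single citation finishes each converse immediately. Your Bass-class route is a legitimate alternative and is essentially what underlies Holm--White's lemma, so nothing is lost mathematically; the paper's version is just shorter.

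Two points in your sketch need tightening. First, in the converse of (2) the relevant module is $B\otimes_R C\in\Mod S^{op}$ (the target of the Auslander-class unit for $B$), not ``$\Hom_{R^{op}}(C,B)$'', which is undefined since $C$ carries no right $R$-structure; your noetherian remark is then exactly right, since $J^+$ flat forces $J$ injective only over right noetherian $S$. Second, the step ``$\theta_A$ is an isomorphism'' is not yet justified: knowing that source and target of $\theta_A^+$ are abstractly isomorphic is insufficient. One must either identify $\theta_A^+$ with the unit $\mu_{A^+}$ through the natural isomorphism $\Hom_R(C,A)^+\cong A^+\otimes_R C$, or argue that $\Hom_R(C,\theta_A)$ is an isomorphism (triangle identity plus $P\in\mathcal{A}_C(S)$) and then use faithfulness of $\Hom_R(C,-)$ and $\Ext^1_R(C,C\otimes_S P)=0$ to annihilate kernel and cokernel. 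Either completion works; the second is where the faithfully semidualizing hypothesis genuinely enters in your approach.
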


\begin{proof}
(1) The first assertion follows from \cite[Lemma 2.3(1)]{TH3}.
Conversely, let $S$ be a right coherent ring and ${_RC_S}$ faithfully semidualizing,
and let $A\in\Mod R$ such that $A^+\in\mathcal{I}_C(R^{op})$. Then $A^+=\Hom_{S^{op}}(C,I)$ for
some injective module $I$ in $\Mod S^{op}$. By \cite[Lemma 2.16(c)]{GT12}, we have
$$A^{++}=[\Hom_{S^{op}}(C,I)]^+\cong C\otimes_SI^+.$$
By \cite[Corollary 2.18(b)]{GT12}, we have that $I^+\in\Mod S$ is flat, and so $A^{++}\in\mathcal{F}_C(R)$.
By \cite[Theorem 2.27]{GT12}, we have that $A$ is isomorphic to a pure submodule of $A^{++}$.
It follows from \cite[Lemma 5.2(a)]{HW07} that $A\in\mathcal{F}_C(R)$.

(2) The first assertion follows from \cite[Lemma 2.3(2)]{TH3}.
Conversely, let $S$ be a right noetherian ring and ${_RC_S}$ is faithfully semidualizing,
and let $B\in\Mod R^{op}$ such that $B^+\in\mathcal{F}_C(R)$. Then $B^+=C\otimes_SF$
for some flat module $F$ in $\Mod S$. By the adjoint isomorphism theorem, we have
$$B^{++}=(C\otimes_SF)^+\cong\Hom_{S^{op}}(C,F^+).$$
By \cite[Corollary 2.18(b)]{GT12}, we have that $F^+\in\Mod S^{op}$ is injective, and so $B^{++}\in\mathcal{I}_C(R^{op})$.
By \cite[Theorem 2.27]{GT12}, we have that $B$ is isomorphic to a pure submodule of $B^{++}$.
It follows from \cite[Lemma 5.2(b)]{HW07} that $B\in\mathcal{I}_C(R^{op})$.

(3) Because $\mathcal{I}_C(R^{op})^+\subseteq\mathcal{PI}(R)$ by \cite[Theorem 2.27]{GT12},
the assertion follows from the above two assertions.
\end{proof}

We introduce the notion of $C$-cotorsion modules as follows, which is a $C$-version of that of cotorsion modules
in \cite{E2,EJ00,X}.

\begin{definition} \label{4.3}
{\rm A module $A\in \Mod R$ is called {\bf $C$-cotorsion} if $A\in{\mathcal{F}_C(R)^\bot}$.}
\end{definition}

We use $\mathcal{C}_C(R)$ to denote the subcategory of $\Mod R$ consisting of $C$-cotorsion modules.
For a module $A\in\Mod R$, $\sigma_A:A\rightarrow A^{++}$
defined by $\sigma_A(x)(f)=f(x)$ for any $x\in A$ and $f\in A^+$ is the canonical valuation homomorphism.
The first assertion in the following result is a $C$-version of \cite[Lemma 3.2.3]{X} (also cf. \cite[Lemma 2.3]{E2}).

\begin{proposition} \label{4.4}
Let $S$ be a right coherent ring. Then we have
\begin{enumerate}
\item[(1)] $\mathcal{F}_C(R)\cap\mathcal{C}_C(R)=\mathcal{F}_C(R)\cap\mathcal{PI}(R)$.
\item[(2)] $\mathcal{F}_C(R)\cap\mathcal{PI}(R)$ is an injective cogenerator for $\mathcal{F}_C(R)$.
\item[(3)] If $C$ is faithfully semidualizing, then $\mathcal{F}_C(R)\cap\mathcal{PI}(R)$ is preenveloping in $\Mod R$.
\end{enumerate}
\end{proposition}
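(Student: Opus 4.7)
The plan is to establish (1) in both directions directly, derive (2) as an immediate consequence, and for (3) exploit the duality upgrade afforded by the faithful-semidualizing hypothesis.

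For the inclusion $\mathcal{F}_C(R)\cap\mathcal{PI}(R)\subseteq\mathcal{F}_C(R)\cap\mathcal{C}_C(R)$ in (1), start with $A$ in the LHS and $F\in\mathcal{F}_C(R)$. Pure injectivity makes $\sigma_A$ split, so $A$ is a direct summand of $A^{++}$; it then suffices to show $\Ext^{\geq 1}_R(F,A^{++})=0$. Lemma \ref{4.2}(1) gives $A^+=\Hom_{S^{op}}(C,I)$ for some injective $I\in\Mod S^{op}$, and the adjoint isomorphism yields $A^{++}\cong C\otimes_S I^+$, with $I^+\in\Mod S$ flat by Lambek (this is where $S$ right coherent enters). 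The identity $\Ext^i_R(F,D^+)\cong[\Tor^R_i(D,F)]^+$ from \cite[Lemma 2.16(b)]{GT12} converts the task into $\Tor^R_{\geq 1}(\Hom_{S^{op}}(C,I),F)=0$; writing $F=C\otimes_S Q$ as a filtered colimit of copies of $C$ reduces further to $\Tor^R_{\geq 1}(\Hom_{S^{op}}(C,I),C)=0$. That vanishing is a standard semidualizing computation: a degreewise finite projective resolution $P_\ast\to C$ (condition (a1)) together with the identities $\Hom_R(C,C)=S$ and $\Ext^{\geq 1}_R(C,C)=0$ produces an exact cochain $0\to S\to\Hom_R(P_0,C)\to\Hom_R(P_1,C)\to\cdots$; applying exact $\Hom_{S^{op}}(-,I)$ combined with the natural isomorphism $\Hom_{S^{op}}(C,I)\otimes_R P_n\cong\Hom_{S^{op}}(\Hom_R(P_n,C),I)$ (valid as $P_n$ is finitely generated projective) finishes the computation.

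For the reverse inclusion, take $A=C\otimes_S P\in\mathcal{F}_C(R)\cap\mathcal{C}_C(R)$ with $P$ flat in $\Mod S$. The canonical map $\sigma_P\colon P\hookrightarrow P^{++}$ is a pure monomorphism with flat pure-injective target (Lambek applied twice, using $S$ right coherent) and flat cokernel $P^{++}/P$ (pure quotient of flats is flat). Applying $C\otimes_S-$ yields an exact sequence $0\to A\to C\otimes_S P^{++}\to C\otimes_S(P^{++}/P)\to 0$ in $\Mod R$, which is moreover pure because tensoring over $R$ with any $N\in\Mod R^{op}$ reduces it to $(N\otimes_R C)\otimes_S-$ of the original pure sequence in $\Mod S$. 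The middle term $C\otimes_S P^{++}\cong[\Hom_{S^{op}}(C,P^+)]^+$ is a character module, hence pure injective, and $C\otimes_S(P^{++}/P)\in\mathcal{F}_C(R)$, so the $C$-cotorsion hypothesis on $A$ splits the sequence and realizes $A$ as a summand of a pure injective. Part (2) is then immediate: the just-constructed short exact sequence is the cogenerator witness, and the required orthogonality $\mathcal{F}_C(R)\perp(\mathcal{F}_C(R)\cap\mathcal{PI}(R))$ is precisely the $\supseteq$ direction of (1).

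For (3), faithful semidualizing promotes Lemma \ref{4.2}(1) to an equivalence, making $(\mathcal{F}_C(R),\mathcal{I}_C(R^{op}))$ a product-closed duality pair in the sense of Definition \ref{3.3}: $C_S$ is finitely presented by condition (a2) so $C\otimes_S-$ commutes with products; products of flat $S$-modules are flat by Chase's theorem (using $S$ right coherent); and $\mathcal{F}_C(R)$ is closed under direct summands via Foxby equivalence on the Bass class. Lemma \ref{3.4}(2) then yields that $\mathcal{F}_C(R)$ itself is preenveloping in $\Mod R$. To upgrade this to $\mathcal{F}_C(R)\cap\mathcal{PI}(R)$-preenveloping, I would observe that for any $N=C\otimes_S P\in\mathcal{F}_C(R)$, the map $N\to C\otimes_S P^{++}$ constructed in (1) is itself a $(\mathcal{F}_C(R)\cap\mathcal{PI}(R))$-preenvelope: any morphism $N\to B$ with $B\in\mathcal{F}_C(R)\cap\mathcal{PI}(R)$ corresponds via Foxby to a morphism $P\to\Hom_R(C,B)$ whose target is flat and pure injective in $\Mod S$, so it factors through $\sigma_P\colon P\to P^{++}$ because $\sigma_P$ is the pure-injective preenvelope of the flat module $P$. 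Composing a $\mathcal{F}_C(R)$-preenvelope $M\to N$ with this second preenvelope $N\to C\otimes_S P^{++}$ then produces the required preenvelope of an arbitrary $M\in\Mod R$. The main obstacle is the careful bookkeeping needed to verify that this second map really is a preenvelope, which ultimately rests on the interplay between Foxby equivalence and the classical theory of pure-injective preenvelopes of flat modules.
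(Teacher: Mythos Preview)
Your argument is correct, and for parts (1) and (2) it follows essentially the paper's route: the same pure exact sequence $0\to A\to A^{++}\to A^{++}/A\to 0$ (your detour through $\sigma_P$ in $\Mod S$ is immaterial since $C\otimes_S P^{++}\cong A^{++}$ naturally once $(C\otimes_S P)^+\cong\Hom_{S^{op}}(C,P^+)$), and the same reduction of $\Ext_R^{\geq 1}(\mathcal{F}_C(R),A^{++})$ to a Tor-vanishing. The paper cites \cite[Lemma 2.5(a)]{TH3} and \cite[Lemma 5.2(a)]{HW07} where you unwind the computations by hand; your version is more self-contained.

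Part (3) is where the approaches genuinely differ. You transport the problem through Foxby equivalence to $\Mod S$, which obliges you to check that $\Hom_R(C,-)$ carries $\mathcal{F}_C(R)\cap\mathcal{PI}(R)$ into flat pure-injectives over $S$ (true, but extra work). The paper stays in $\Mod R$ and uses only the naturality of $\sigma$: given an $\mathcal{F}_C(R)$-preenvelope $f\colon A\to Q$ and any $h\colon Q\to Q'$ with $Q'\in\mathcal{F}_C(R)\cap\mathcal{PI}(R)$, the square $\sigma_{Q'}h=h^{++}\sigma_Q$ together with a retraction $g$ of $\sigma_{Q'}$ gives $h=(gh^{++})\sigma_Q$; hence $\sigma_Qf\colon A\to Q^{++}$ is already the $\mathcal{F}_C(R)\cap\mathcal{PI}(R)$-preenvelope. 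This one-line trick replaces your entire Foxby detour and is worth internalizing; conversely, your duality-pair derivation of the $\mathcal{F}_C(R)$-preenvelope via Lemma~\ref{3.4} makes explicit what the paper hides in the citation of \cite[Proposition 5.3(d)]{HW07}.
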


\begin{proof}
(1) Let $A\in\mathcal{F}_C(R)\cap\mathcal{C}_C(R)$. Then $A^+\in\mathcal{I}_C(R^{op})$ and $A^{++}\in\mathcal{F}_C(R)$
by Lemma \ref{4.2}. By \cite[Corollary 2.21(b)]{GT12}, we have the following pure exact sequence
$$0\to A \buildrel {\sigma_A}\over \longrightarrow A^{++}\to A^{++}/A\to 0\eqno{(4.1)}$$
in $\Mod R$. Then $A^{++}/A\in \mathcal{F}_C(R)$ by \cite[Lemma 5.2(a)]{HW07}. So this exact sequence splits,
and hence $A$ is isomorphic to a direct summand of $A^{++}$. Because $A^{++}$
is pure injective by \cite[Theorem 2.27]{GT12}, we have that
$A$ is also pure injective and $A\in\mathcal{F}_C(R)\cap\mathcal{PI}(R)$.

Conversely, let $A\in\mathcal{F}_C(R)\cap\mathcal{PI}(R)$ and $Q\in\mathcal{F}_C(R)$.
Then $Q=C\otimes_SF$ for some flat module $F$ in $\Mod S$. For any $i\geq 1$, we have
\begin{align*}
&\ \ \ \ \ \  \ \ \ \Ext_R^i(Q,A^{++})\\
&\ \ \ \ \ =\Ext_R^i(C\otimes_SF,A^{++})\\
&\ \ \ \ \ \cong[\Tor_i^R(A^+,C\otimes_SF)]^+\ \text{(by \cite[Lemma 2.16(b)]{GT12})}\\
&\ \ \ \ \ \cong[\Tor_i^R(A^+,C)\otimes_SF]^+\ \text{(by \cite[Theorem 9.48]{R})}\\
&\ \ \ \ \ \cong[(\Ext_R^i(C,A))^+\otimes_SF]^+\ \text{(by \cite[Lemma 2.16(d)]{GT12})}\\
&\ \ \ \ \ =0 \ \text{(by \cite[Lemma 2.5(a)]{TH3})}.
\end{align*}
Because $A$ is isomorphic to a direct summand of $A^{++}$ by \cite[Theorem 2.27]{GT12},
we have that $\Ext_R^i(Q,A)=0$ for any $i\geq 1$ and $A$ is $C$-cotorsion. Thus
$A\in\mathcal{F}_C(R)\cap\mathcal{C}_C(R)$.

(2) Let $A\in\mathcal{F}_C(R)$. Then we have an exact sequence as (4.1) with $A^{++}\in\mathcal{F}_C(R)\cap\mathcal{PI}(R)$
and $A^{++}/A\in \mathcal{F}_C(R)$. By (1), we have $A^{++}\in\mathcal{C}_C(R)(={\mathcal{F}_C(R)^\bot})$. The assertion follows.

(3) Let $A\in\Mod R$. By \cite[Proposition 5.3(d)]{HW07}, we have an $\mathcal{F}_C(R)$-preenvelope $f:A\to Q$ of $A$.
Let $Q^{'}\in\mathcal{F}_C(R)\cap\mathcal{PI}(R)$ and $f^{'}\in\Hom_R(A,Q^{'})$. Then there exists $h\in \Hom_R(Q,Q^{'})$ such that
$f^{'}=hf$. From the following commutative diagram
\begin{gather*}
\begin{split}
\xymatrix{
& Q \ar[rr]^{\sigma_Q} \ar [d]^{h}  && Q^{++} \ar [d]^{h^{++}} \\
& Q^{'} \ar[rr]^{\sigma_{Q^{'}}}  &&{Q^{'}}^{++},}
\end{split}
\end{gather*}
we have $\sigma_{Q^{'}}h=h^{++}\sigma_Q$. By \cite[Theorem 2.27]{GT12}, $\sigma_{Q^{'}}$ is a split monomorphism
and there exists $g\in\Hom_R({Q^{'}}^{++},Q^{'})$ such that $g\sigma_{Q^{'}}=1_{Q^{'}}$. So we have
$$f^{'}=hf=(g\sigma_{Q^{'}})hf=(gh^{++})(\sigma_Qf).$$
It follows that the homomorphism $\sigma_Qf:A\to Q^{++}$ is an $\mathcal{F}_C(R)\cap\mathcal{PI}(R)$-preenvelope of $A$.
\end{proof}

The following notions were introduced by Holm and J$\phi$gensen in \cite{HJ06} for
commutative rings. The following are their non-commutative versions.

\begin{definition} \label{4.5}
{\rm \begin{enumerate}
\item[]
\item[(1)] A module $M\in\Mod R$ is called {\bf $C$-Gorenstein projective} if
$M\in{^{\bot}\mathcal{P}_C(R)}$ and there exists a $\Hom_R(-,\mathcal{P}_C(R))$-exact exact sequence
$$0\rightarrow M\rightarrow G^0\rightarrow G^1\rightarrow \cdots\rightarrow G^i\rightarrow \cdots$$
in $\Mod R$ with all $G^i$ in $\mathcal{P}_C(R)$.
\item[(2)] A module $M\in\Mod R$ is called {\bf $C$-Gorenstein flat} if
$M\in\mathcal{I}_C(R^{op})^{\top}$ and there exists an $(\mathcal{I}_C(R^{op})\otimes_R-)$-exact exact sequence
$$0\rightarrow M\rightarrow Q^0\rightarrow Q^1\rightarrow \cdots\rightarrow Q^i\rightarrow \cdots$$
in $\Mod R$ with all $Q^i$ in $\mathcal{F}_C(R)$.
\item[(3)] A module $N\in\Mod R^{op}$ is called {\bf $C$-Gorenstein injective} if
$N\in\mathcal{I}_C(R^{op})^{\bot}$ and there exists a $\Hom_{R^{op}}(\mathcal{I}_C(R^{op}),-)$-exact exact sequence
$$\cdots\rightarrow E_i\rightarrow \cdots\rightarrow E_1\rightarrow E_0\rightarrow N\rightarrow 0$$
in $\Mod R^{op}$ with all $E_i$ in $\mathcal{I}_C(R^{op})$.
\end{enumerate}}
\end{definition}

We use $\mathcal{GP}_C(R)$ (resp. $\mathcal{GF}_C(R)$) to denote the subcategory of $\Mod R$
consisting of $C$-Gorenstein projective (resp. flat) modules, and use $\mathcal{GI}_C(R^{op})$
to denote the subcategory of $\Mod R^{op}$ consisting of $C$-Gorenstein injective modules.
When $_RC_S={_RR_R}$, $C$-Gorenstein projective, flat and injective modules are
the classical Gorenstein projective, flat and injective modules, respectively (\cite{EJ00,Ho04}).

The following equivalent characterizations of $C$-Gorenstein flat modules are useful in the sequel.

\begin{theorem} \label{4.6}
Let $S$ be a right coherent ring and $A\in\Mod R$. Then the following statements are equivalent.
\begin{enumerate}
\item[(1)] $A\in\mathcal{GF}_C(R)$.
\item[(2)] $A\in r\mathcal{G}(\mathcal{F}_C(R)\cap\mathcal{C}_C(R))$.
\item[(3)] $A\in{^\bot(\mathcal{F}_C(R)\cap\mathcal{C}_C(R))}$ and there exists a
$\Hom(-,\mathcal{F}_C(R)\cap\mathcal{C}_C(R))$-exact exact sequence
$$0\ra A\ra Q^0\ra Q^1\ra \cdots\ra Q^i\ra \cdots$$ in $\Mod R$ with all $Q^i\in\mathcal{F}_C(R)$.
\item[(4)] $A\in r\mathcal{G}(\mathcal{F}_C(R)\cap\mathcal{PI}(R))$.
\item[(5)] $A\in{^\bot(\mathcal{F}_C(R)\cap\mathcal{PI}(R))}$ and there exists a
$\Hom(-,\mathcal{F}_C(R)\cap\mathcal{PI}(R))$-exact exact sequence
$$0\ra A\ra Q^0\ra Q^1\ra \cdots\ra Q^i\ra \cdots$$ in $\Mod R$ with all $Q^i\in\mathcal{F}_C(R)$.
\end{enumerate}
\end{theorem}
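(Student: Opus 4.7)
The plan is to reduce the equivalence of the five conditions to proving $(1) \Leftrightarrow (3)$ and $(3) \Rightarrow (2)$. By Proposition~\ref{4.4}(1), $\mathcal{F}_C(R) \cap \mathcal{PI}(R) = \mathcal{F}_C(R) \cap \mathcal{C}_C(R)$, so $(2) \Leftrightarrow (4)$ and $(3) \Leftrightarrow (5)$ are literally the same assertions, and $(2) \Rightarrow (3)$ is immediate from the inclusion $\mathcal{F}_C(R) \cap \mathcal{PI}(R) \subseteq \mathcal{F}_C(R)$. Write $\mathcal{W} := \mathcal{F}_C(R) \cap \mathcal{PI}(R)$ throughout.

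To handle $(1) \Leftrightarrow (3)$, I will use the $\mathbb{Z}$-duality $(-)^+$. For $(1) \Rightarrow (3)$, fix $W \in \mathcal{W}$. Pure injectivity makes $\sigma_W \colon W \to W^{++}$ split, so $W$ is a direct summand of $W^{++}$; and $W \in \mathcal{F}_C(R)$ gives $W^+ \in \mathcal{I}_C(R^{op})$ by Lemma~\ref{4.2}(1). For each $i \geq 1$,
\[
\Ext^i_R(A, W^{++}) \cong [\Tor_i^R(W^+, A)]^+ = 0
\]
by \cite[Lemma~2.16(b)]{GT12} and $A \in \mathcal{I}_C(R^{op})^{\top}$, whence $A \in {}^{\bot}\mathcal{W}$. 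A parallel dualisation via the tensor-hom adjunction converts the given $(\mathcal{I}_C(R^{op}) \otimes_R -)$-exact coresolution into a $\Hom_R(-, W^{++})$-exact one, and restriction to the summand $W$ yields $\Hom_R(-, W)$-exactness. For $(3) \Rightarrow (1)$, fix $I \in \mathcal{I}_C(R^{op})$; Lemma~\ref{4.2}(2) (valid because $S$ is right coherent) together with pure injectivity of character modules gives $I^+ \in \mathcal{W}$. Then $[\Tor_i^R(I, A)]^+ \cong \Ext^i_R(A, I^+) = 0$ for $i \geq 1$, so $A \in \mathcal{I}_C(R^{op})^{\top}$. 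Applying $\Hom_R(-, I^+)$ to the coresolution in (3) and invoking the adjunction produces the $(-)^+$-dual of $0 \to I \otimes_R A \to I \otimes_R Q^0 \to \cdots$ as an exact sequence of abelian groups; since $\mathbb{Q}/\mathbb{Z}$ is a $\mathbb{Z}$-injective cogenerator, the tensored complex itself is exact, supplying the $(\mathcal{I}_C(R^{op}) \otimes_R -)$-exact $\mathcal{F}_C(R)$-coresolution required in Definition~\ref{4.5}(2).

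For $(3) \Rightarrow (2)$, Proposition~\ref{4.4}(2) shows $\mathcal{W}$ is an injective cogenerator for $\mathcal{F}_C(R)$, and in particular $\mathcal{F}_C(R) \perp \mathcal{W}$. My strategy is to upgrade the $\mathcal{F}_C(R)$-coresolution $0 \to A \to Q^0 \to Q^1 \to \cdots$ of (3) to a $\mathcal{W}$-coresolution by induction. Embed $Q^0$ into some $W^0 \in \mathcal{W}$ with cokernel $L^0 \in \mathcal{F}_C(R)$, and push out along the first short exact sequence $0 \to A \to Q^0 \to Z^1 \to 0$ to obtain $0 \to A \to W^0 \to Y^1 \to 0$ fitting into $0 \to Z^1 \to Y^1 \to L^0 \to 0$. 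A dimension-shifting argument using $A \in {}^{\bot}\mathcal{W}$, the relation $\mathcal{F}_C(R) \perp \mathcal{W}$, and the $\Hom_R(-, \mathcal{W})$-exactness of the original coresolution shows that $Y^1 \in {}^{\bot}\mathcal{W}$ and that $Y^1$ inherits a $\Hom_R(-, \mathcal{W})$-exact $\mathcal{F}_C(R)$-coresolution, obtained by splicing the truncated sequence $Q^1 \to Q^2 \to \cdots$ with an $\mathcal{F}_C(R)$-coresolution of $L^0$ built from the cogenerator property. Iterating yields the required $\Hom_R(-, \mathcal{W})$-exact $\mathcal{W}$-coresolution of $A$.

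The hardest step is $(3) \Rightarrow (2)$: one must propagate both the Ext-orthogonality $Y^1 \in {}^{\bot}\mathcal{W}$ and the existence of a $\Hom_R(-, \mathcal{W})$-exact $\mathcal{F}_C(R)$-coresolution of $Y^1$ through each pushout so that the induction closes up. The critical ingredients are the injective-cogenerator property of $\mathcal{W}$ in $\mathcal{F}_C(R)$ (Proposition~\ref{4.4}(1)(2)) and the orthogonality $A \in {}^{\bot}\mathcal{W}$; by contrast the Tor-Ext duality of $(-)^+$ is decisive only for $(1) \Leftrightarrow (3)$.
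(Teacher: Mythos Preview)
Your overall strategy is sound and differs from the paper's in a meaningful way. The paper obtains $(2)\Leftrightarrow(3)\Leftrightarrow(4)\Leftrightarrow(5)$ by invoking \cite[Theorem~3.6]{SZ} as a black box (once Proposition~\ref{4.4} supplies that $\mathcal{W}=\mathcal{F}_C(R)\cap\mathcal{PI}(R)$ is an injective cogenerator for $\mathcal{F}_C(R)$), and then gets $(1)\Leftrightarrow(4)$ directly from Lemma~\ref{3.1} together with Lemma~\ref{4.2}(3). You instead reprove both pieces by hand. Your argument for $(1)\Leftrightarrow(3)$ is correct and is exactly the content of Lemma~\ref{3.1} specialized to $\C=\mathcal{W}$ and $\mathscr{D}=\mathcal{I}_C(R^{op})$; this buys you a self-contained proof at the cost of length.

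There is, however, a genuine gap in your $(3)\Rightarrow(2)$ step. After the pushout yields $0\to Z^1\to Y^1\to L^0\to 0$, you propose to obtain an $\mathcal{F}_C(R)$-coresolution of $Y^1$ by ``splicing'' the truncated coresolution $0\to Z^1\to Q^1\to Q^2\to\cdots$ with one of $L^0$. Read as a dual Horseshoe, this requires the embedding $Z^1\hookrightarrow Q^1$ to extend to a map $Y^1\to Q^1$; the obstruction is the image of that embedding in $\Ext^1_R(L^0,Q^1)$, and there is no reason for this to vanish, since $L^0,Q^1\in\mathcal{F}_C(R)$ and $\mathcal{F}_C(R)$ is not self-orthogonal in general. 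A clean repair is to push out $0\to Z^1\to Y^1\to L^0\to 0$ along $Z^1\hookrightarrow Q^1$ instead, obtaining $0\to Y^1\to P^1\to Z^2\to 0$ where $P^1$ is an extension of $L^0$ by $Q^1$; then use that $\mathcal{F}_C(R)$ is closed under extensions (via Foxby equivalence and extension-closure of flat $S$-modules, cf.\ \cite{HW07}) to get $P^1\in\mathcal{F}_C(R)$, and splice with $0\to Z^2\to Q^2\to\cdots$. The result is $\Hom_R(-,\mathcal{W})$-exact because each $Z^i\in{^\bot\mathcal{W}}$. This extension-closure is the missing ingredient your sketch omits, and it is precisely the sort of hypothesis that \cite[Theorem~3.6]{SZ} packages in the abstract setting.
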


\begin{proof}
By Proposition \ref{4.4}, we have that
$\mathcal{F}_C(R)\cap\mathcal{PI}(R)(=\mathcal{F}_C(R)\cap\mathcal{C}_C(R))$ is an
injective cogenerator for $\mathcal{F}_C(R)$.
Then by \cite[Theorem 3.6]{SZ}, we have $(3)\Leftrightarrow (2)\Leftrightarrow (4)\Leftrightarrow (5)$.
By Lemma \ref{4.2}(3), we have $(\mathcal{F}_C(R)\cap\mathcal{PI}(R))^+\subseteq\mathcal{I}_C(R^{op})$
and $\mathcal{I}_C(R^{op})^+\subseteq \mathcal{F}_C(R)\cap\mathcal{PI}(R)$.
Then by Lemma \ref{3.1}, we have $(1)\Leftrightarrow (4)$.
\end{proof}

Recall from \cite{K} that a subcategory of $\Mod S$ is called {\bf definable}
if it is closed under direct limits, products and pure submodules in $\Mod S$.
We have the following

\begin{lemma} \label{4.7}
\begin{enumerate}
\item[]
\item[(1)] $\mathcal{G}\mathcal{P}_C(R)=r\mathcal{G}(\mathcal{P}_C(R))$
and $\mathcal{G}\mathcal{I}_C(R^{op})=l\mathcal{G}(\mathcal{I}_C(R^{op}))$.
\item[(2)] If $S$ is a right coherent and left perfect ring, then
$\mathcal{GP}_C(R)=\mathcal{GF}_C(R)$.
\end{enumerate}
\end{lemma}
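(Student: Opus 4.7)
The lemma splits into two independent parts with quite different flavors.

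For part (1), both equalities follow by unfolding the definitions. By Definition \ref{2.2}, $r\mathcal{G}(\mathcal{P}_C(R)) = {}^\perp\mathcal{P}_C(R) \cap \cores\widetilde{\mathcal{P}_C(R)}$, and the second factor is precisely the existence of a $\Hom_R(-,\mathcal{P}_C(R))$-exact exact sequence $0 \to A \to C^0 \to C^1 \to \cdots$ with $C^i \in \mathcal{P}_C(R)$; combined with ${}^\perp\mathcal{P}_C(R)$ this matches Definition \ref{4.5}(1) of a $C$-Gorenstein projective module. The equality $\mathcal{GI}_C(R^{op}) = l\mathcal{G}(\mathcal{I}_C(R^{op}))$ follows by the same unfolding with $\res$ replacing $\cores$ and Definition \ref{4.5}(3) in place of \ref{4.5}(1).

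For part (2), my plan is to establish the two inclusions of $\mathcal{GP}_C(R) = \mathcal{GF}_C(R)$. The key observation is that left perfectness of $S$ forces $\mathcal{F}_C(R) = \mathcal{P}_C(R)$, so in both directions the relevant coresolutions live in the same class. For $\mathcal{GP}_C(R) \subseteq \mathcal{GF}_C(R)$, I would use $I^+ \in \mathcal{F}_C(R) = \mathcal{P}_C(R)$ for $I \in \mathcal{I}_C(R^{op})$ (Lemma \ref{4.2}(2) under right coherence of $S$, combined with left perfectness), and the character-module identity $[\Tor^R_i(I,M)]^+ \cong \Ext^i_R(M, I^+)$ from \cite[Lemma 2.16(b)]{GT12} to translate the $\Ext$-vanishing and $\Hom(-,\mathcal{P}_C(R))$-exactness conditions of Definition \ref{4.5}(1) into the Tor-vanishing and $(\mathcal{I}_C(R^{op})\otimes_R-)$-exactness conditions of Definition \ref{4.5}(2). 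For the reverse inclusion, I would invoke Theorem \ref{4.6} and part (1) to reduce to the equality
$$r\mathcal{G}(\mathcal{P}_C(R)) = r\mathcal{G}(\W), \qquad \W := \mathcal{P}_C(R) \cap \mathcal{PI}(R).$$

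Two ingredients are needed here: (i) by Proposition \ref{4.4}(2) applied under $\mathcal{F}_C(R) = \mathcal{P}_C(R)$, $\W$ is an injective cogenerator for $\mathcal{P}_C(R)$; and (ii) self-orthogonality $\mathcal{P}_C(R) \perp \mathcal{P}_C(R)$, which follows from $\Ext_R^{\geq 1}(C, C^{(I)}) = \bigoplus_I \Ext_R^{\geq 1}(C, C) = 0$ — valid because $_RC$ has a degreewise finite projective resolution (Definition \ref{4.1}(a1)), so $\Ext_R^{\geq 1}(C, -)$ commutes with coproducts, and every $C$-projective is a direct summand of some $C^{(I)}$. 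With (i) and (ii), the main obstacle is showing that $A \in {}^\perp\W$ implies $A \in {}^\perp\mathcal{P}_C(R)$ for $A \in r\mathcal{G}(\W)$, from which the $\Hom(-,\mathcal{P}_C(R))$-exactness of the $\W$-coresolution follows automatically via self-orthogonality and standard dimension-shifting. This upgrade is carried out by combining the $\W$-coresolution of $A$ with the iterated cogenerator coresolution of any $P \in \mathcal{P}_C(R)$, and is precisely the kind of argument systematized by \cite[Theorem 3.6]{SZ} — the same device invoked in the proof of Theorem \ref{4.6}.
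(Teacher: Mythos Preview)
Your treatment of part (1) is fine and matches the paper's ``trivial''. For part (2), your forward inclusion $\mathcal{GP}_C(R)\subseteq\mathcal{GF}_C(R)$ via character modules is correct. The problem is the reverse inclusion.

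You reduce to showing $r\mathcal{G}(\W)\subseteq r\mathcal{G}(\mathcal{P}_C(R))$ with $\W=\mathcal{P}_C(R)\cap\mathcal{PI}(R)$, and you correctly isolate the obstacle: passing from $A\in{}^{\perp}\W$ to $A\in{}^{\perp}\mathcal{P}_C(R)$. But your proposed resolution does not close the gap. Dimension-shifting along the cogenerator coresolution $0\to P\to V^0\to V^1\to\cdots$ of $P\in\mathcal{P}_C(R)$ gives $\Ext^{i+1}_R(A,P)\cong\Ext^1_R(A,P^{(i)})$ with $P^{(i)}\in\mathcal{P}_C(R)$, which is circular; dimension-shifting along the $\W$-coresolution of $A$ gives $\Ext^i_R(A,P)\cong\Ext^{i+j}_R(A^{(j)},P)$, which moves the index in the wrong direction. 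Neither shift, nor their combination, forces vanishing. The citation of \cite[Theorem 3.6]{SZ} is also off-target: as used in Theorem~\ref{4.6}, that result compares $r\mathcal{G}(\W)$ with the class of modules admitting a $\Hom(-,\W)$-exact $\mathcal{F}_C(R)$-coresolution, not with $r\mathcal{G}(\mathcal{P}_C(R))$; it does not supply the missing $\Ext$-vanishing against all of $\mathcal{P}_C(R)$.

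The paper avoids this entirely by proving $\mathcal{P}_C(R)=\W$, i.e.\ every $C$-projective module is pure injective. The argument is: over a right coherent left perfect ring $S$, every projective decomposes as a direct sum of indecomposable projectives (\cite[Theorem 27.11]{AF}), so the class of projective $S$-modules is definable (\cite[Theorem 5]{SE}) and hence consists of pure injectives (\cite[Corollary 2.7]{K}); pure injectivity then transfers along $C\otimes_S-$ by \cite[Theorem 3.5(a)]{YH}. Once $\mathcal{P}_C(R)=\mathcal{F}_C(R)\cap\mathcal{PI}(R)$, the equality $r\mathcal{G}(\mathcal{P}_C(R))=r\mathcal{G}(\mathcal{F}_C(R)\cap\mathcal{PI}(R))$ is a tautology, and Theorem~\ref{4.6} plus part (1) finish. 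This pure-injectivity step is the substantive content of the lemma, and your outline bypasses it without an adequate substitute.
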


\begin{proof}
(1) It is trivial.

(2) Let $S$ be a right coherent and left perfect ring. Then any flat module in $\Mod S$ is projective
by \cite[Theorem 28.4]{AF}, and so $\mathcal{P}_C(R)=\mathcal{F}_C(R)$. On the other hand,
by \cite[Theorem 27.11]{AF}, any projective module in $\Mod S$ has a decomposition as a direct sum of
indecomposable projective submodules. It follows from \cite[Theorem 5]{SE}, the subcategory of $\Mod S$ consisting of
projective modules is definable. So all projective modules in $\Mod S$ are pure injective by \cite[Corollary 2.7]{K},
and hence all modules in $\mathcal{P}_C(R)$ are pure injective by \cite[Theorem 3.5(a)]{YH}.
Thus $\mathcal{P}_C(R)=\mathcal{F}_C(R)\cap\mathcal{PI}(R)$. Now by (1) and Theorem \ref{4.6}, we have
$$\mathcal{G}\mathcal{P}_C(R)=r\mathcal{G}(\mathcal{P}_C(R))=r\mathcal{G}(\mathcal{F}_C(R)\cap\mathcal{PI}(R))=\mathcal{GF}_C(R).$$
\end{proof}

Now we give an application of Theorem \ref{3.5} as follows.

\begin{theorem}\label{4.8}
If $S$ is a right coherent ring and ${_RC_S}$ is faithfully semidualizing,
then $$(\mathcal{GF}_C(R),\mathcal{GI}_C(R^{op}))$$ is a coproduct-closed duality pair
and $\mathcal{GF}_C(R)$ is covering in $\Mod R$.
\end{theorem}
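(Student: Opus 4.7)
The plan is to derive Theorem \ref{4.8} as an instance of Theorem \ref{3.5} applied to the specific pair $\C := \mathcal{F}_C(R)\cap \mathcal{PI}(R) \subseteq \Mod R$ and $\mathscr{D} := \mathcal{I}_C(R^{op}) \subseteq \Mod R^{op}$. With these choices, Theorem \ref{4.6} identifies $r\mathcal{G}(\C)$ with $\mathcal{GF}_C(R)$, and Lemma \ref{4.7}(1) identifies $l\mathcal{G}(\mathscr{D})$ with $\mathcal{GI}_C(R^{op})$, so the asserted duality pair is precisely the one produced by Theorem \ref{3.5}. It therefore suffices to verify hypotheses (a)--(c) of Proposition \ref{3.2} and to check that $\mathcal{GF}_C(R)$ is closed under coproducts.

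For condition (a), the inclusion $\C^+ \subseteq \mathcal{F}_C(R)^+ \subseteq \mathcal{I}_C(R^{op}) = \mathscr{D}$ follows from Lemma \ref{4.2}(1), while $\mathscr{D}^+ \subseteq \C$ follows from Lemma \ref{4.2}(2) (this is where right coherence of $S$ enters) combined with the fact that character modules are automatically pure injective via \cite[Theorem 2.27]{GT12}. Condition (b) is immediate: $\C \subseteq \mathcal{PI}(R)$ by construction, and the preenveloping property of $\C$ is exactly Proposition \ref{4.4}(3), whose proof invokes the faithfulness of $C$. For condition (c), since (a) already gives $\C^+ \subseteq \mathscr{D}$, it suffices to verify that $\mathscr{D} = \mathcal{I}_C(R^{op})$ is self-orthogonal, which is a standard Ext-vanishing property of $C$-injective modules that flows from $\Ext_{S^{op}}^{\geq 1}(C,C) = 0$ together with the injectivity of the underlying modules $I \in \Mod S^{op}$.

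It remains to establish that $\mathcal{GF}_C(R)$ is closed under coproducts. Two observations suffice: the class $\mathcal{F}_C(R)$ is closed under coproducts because flat modules in $\Mod S$ are and $C\otimes_S -$ commutes with coproducts; and $\mathcal{I}_C(R^{op})^{\top}$ is closed under coproducts since $\Tor_{i}^R(N,-)$ commutes with coproducts. Taking the direct sum of the $(\mathcal{I}_C(R^{op})\otimes_R -)$-exact coresolutions witnessing membership in $\mathcal{GF}_C(R)$ for each summand thus produces a single coresolution of the required form for the coproduct. Once these ingredients are in place, Theorem \ref{3.5} simultaneously delivers that $(\mathcal{GF}_C(R),\mathcal{GI}_C(R^{op}))$ is a coproduct-closed duality pair and that $\mathcal{GF}_C(R)$ is covering in $\Mod R$.

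The main obstacle I anticipate is compactly verifying condition (c): although the self-orthogonality of $\mathcal{I}_C(R^{op})$ is folklore, making the argument self-contained may require a brief explicit computation using the representation $I = \Hom_{S^{op}}(C,J)$ with $J$ injective. Every other ingredient is an immediate consequence of the preparatory results (Lemmas \ref{4.2}, \ref{4.7}, Proposition \ref{4.4}, Theorem \ref{4.6}) and of the general machinery of Proposition \ref{3.2} and Theorem \ref{3.5}.
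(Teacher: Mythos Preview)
Your proposal is correct and follows essentially the same approach as the paper's proof: you choose the same pair $\C=\mathcal{F}_C(R)\cap\mathcal{PI}(R)$ and $\mathscr{D}=\mathcal{I}_C(R^{op})$, verify hypotheses (a)--(c) of Proposition~\ref{3.2} via Lemma~\ref{4.2} and Proposition~\ref{4.4}(3), identify the Gorenstein subcategories via Theorem~\ref{4.6} and Lemma~\ref{4.7}(1), and then invoke Theorem~\ref{3.5}. The only cosmetic differences are that the paper bundles condition (a) into Lemma~\ref{4.2}(3), cites \cite[Lemma 2.5(b)]{TH3} for the self-orthogonality of $\mathcal{I}_C(R^{op})$ (so your anticipated obstacle is handled by reference), and cites \cite[Proposition 5.1(a)]{HW07} for coproduct closure of $\mathcal{F}_C(R)$.
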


\begin{proof}
We have
\begin{enumerate}
\item[(a)] $(\mathcal{F}_C(R)\cap\mathcal{PI}(R))^+\subseteq\mathcal{I}_C(R^{op})$
and $\mathcal{I}_C(R^{op})^+\subseteq \mathcal{F}_C(R)\cap\mathcal{PI}(R)$ by Lemma \ref{4.2}(3).
\item[(b)] By Proposition \ref{4.4}(3), $\mathcal{F}_C(R)\cap\mathcal{PI}(R)$ is preenveloping in $\Mod R$.
It is trivial that all modules in $\mathcal{F}_C(R)\cap\mathcal{PI}(R)$ are pure injective.
\item[(c)] By \cite[Lemma 2.5(b)]{TH3}, we have that $\mathcal{I}_C(R^{op})$ is self-orthogonal.
\end{enumerate}
Note that $\mathcal{GF}_C(R)=r\mathcal{G}(\mathcal{F}_C(R)\cap\mathcal{PI}(R))$
and $\mathcal{G}\mathcal{I}_C(R^{op})=l\mathcal{G}(\mathcal{I}_C(R^{op}))$ by Theorem \ref{4.6}
and Lemma \ref{4.7}(1). It follows from Theorem \ref{3.5} that
$$(\mathcal{GF}_C(R),\mathcal{GI}_C(R^{op}))
(=(r\mathcal{G}(\mathcal{F}_C(R)\cap\mathcal{PI}(R)),l\mathcal{G}(\mathcal{I}_C(R^{op})))$$
is a duality pair. Moreover, by \cite[Proposition 5.1(a)]{HW07}, we have that $\mathcal{F}_C(R)$ is closed under coproducts.
Because tensor products commute with coproducts, from the definition of $C$-Gorenstein flat modules it is easy to get that
$\mathcal{GF}_C(R)$ is closed under coproducts. Thus the above duality pair
is a coproduct-closed and $\mathcal{GF}_C(R)$ is covering in $\Mod R$ by Theorem \ref{3.5} again.
\end{proof}

The following corollary was proved in \cite[Theorem 2.12]{EJL} for the case $_RC_S={_RR_R}$.

\begin{corollary}\label{4.9}
If $S$ is a right coherent ring and ${_RC_S}$ is faithfully semidualizing,
then $$(\mathcal{GF}_C(R),\mathcal{GF}_C(R)^{\bot})$$ is a hereditary perfect cotorsion pair in $\Mod R$.
\end{corollary}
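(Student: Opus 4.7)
The plan is to obtain the corollary by applying Lemma~\ref{3.4}(3) to the coproduct-closed duality pair $(\mathcal{GF}_C(R), \mathcal{GI}_C(R^{op}))$ supplied by Theorem~\ref{4.8}, and then promoting the resulting perfect cotorsion pair to a hereditary one. To invoke Lemma~\ref{3.4}(3), I must verify the remaining two conditions of Definition~\ref{3.3}(3), namely that $_RR \in \mathcal{GF}_C(R)$ and that $\mathcal{GF}_C(R)$ is closed under extensions.

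For $_RR \in \mathcal{GF}_C(R)$, I work with the characterization $\mathcal{GF}_C(R) = r\mathcal{G}(\mathcal{F}_C(R) \cap \mathcal{PI}(R))$ from Theorem~\ref{4.6}. Projectivity of $_RR$ gives $_RR \in {^\perp(\mathcal{F}_C(R) \cap \mathcal{PI}(R))}$ for free. The coresolution is built inductively: Proposition~\ref{4.4}(3) yields an $(\mathcal{F}_C(R) \cap \mathcal{PI}(R))$-preenvelope of $_RR$; once this preenvelope is shown to be monic, write $R^1$ for its cokernel. The preenvelope property together with $\mathcal{F}_C(R) \cap \mathcal{PI}(R) \subseteq \mathcal{F}_C(R)^{\perp}$ (Proposition~\ref{4.4}(1)) forces $\Ext^1_R(R^1, X) = 0$ for every $X \in \mathcal{F}_C(R) \cap \mathcal{PI}(R)$, while $\Ext^{\geq 2}_R(R^1, X) \cong \Ext^{\geq 1}_R(R, X) = 0$ by projectivity; so $R^1 \in {^\perp(\mathcal{F}_C(R) \cap \mathcal{PI}(R))}$ as well, and iteration produces the required $\Hom_R(-, \mathcal{F}_C(R) \cap \mathcal{PI}(R))$-exact coresolution. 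Closure of $r\mathcal{G}(\mathcal{F}_C(R) \cap \mathcal{PI}(R))$ under extensions is a general feature from \cite{SZ}: ${^\perp\mathcal{C}}$ is always closed under extensions, and a dual Horseshoe construction on the coresolutions of the outer terms handles the $\cores\widetilde{\mathcal{C}}$ part (the gluing is $\Hom_R(-,\mathcal{C})$-exact thanks to the ${^\perp\mathcal{C}}$ property). Lemma~\ref{3.4}(3) then delivers the perfect cotorsion pair $(\mathcal{GF}_C(R), \mathcal{GF}_C(R)^{\bot})$.

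To promote the cotorsion pair to a hereditary one, I verify condition (3.2) of Definition~\ref{2.4}, namely that $\mathcal{GF}_C(R)$ is resolving. Containment of all projectives follows from $_RR \in \mathcal{GF}_C(R)$ together with closure of $\mathcal{GF}_C(R)$ under coproducts and direct summands (both supplied by Theorem~\ref{4.8}). Extension closure was already obtained. For closure under kernels of epimorphisms the cleanest route is via duality: given a short exact sequence $0 \to K \to G_1 \to G_2 \to 0$ with $G_1, G_2 \in \mathcal{GF}_C(R)$, exactness of the character-module functor $(-)^+$ yields $0 \to G_2^+ \to G_1^+ \to K^+ \to 0$ with $G_i^+ \in \mathcal{GI}_C(R^{op})$ by Proposition~\ref{3.2} (whose hypotheses were checked in the proof of Theorem~\ref{4.8}). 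Since $\mathcal{GI}_C(R^{op}) = l\mathcal{G}(\mathcal{I}_C(R^{op}))$ is closed under cokernels of monomorphisms by the left Gorenstein subcategory machinery of \cite{SZ}, one concludes $K^+ \in \mathcal{GI}_C(R^{op})$, whereupon Proposition~\ref{3.2} returns $K \in \mathcal{GF}_C(R)$.

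The main obstacle is proving that the $(\mathcal{F}_C(R) \cap \mathcal{PI}(R))$-preenvelope of $_RR$ in the second paragraph is monic: this is the one point where the specific structure of the semidualizing bimodule $_RC_S$ enters essentially, and I would expect to establish it via a Foxby-equivalence-style argument using that $_RR$ lies in the Auslander class associated with $_RC_S$. The closure assertions from \cite{SZ} invoked above (extensions and cokernels of monomorphisms for $r\mathcal{G}$ and $l\mathcal{G}$ in the present setting) are secondary technical points but should also be checked carefully against the hypotheses available here.
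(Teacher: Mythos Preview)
Your proposal is correct and follows the same high-level route as the paper: combine the coproduct-closed duality pair from Theorem~\ref{4.8} with Lemma~\ref{3.4}(3), and then upgrade to a hereditary cotorsion pair by checking that $\mathcal{GF}_C(R)$ is resolving. The paper compresses all three resolving conditions (contains projectives, closed under extensions, closed under kernels of epimorphisms) into the single phrase ``projectively resolving'' and defers entirely to an adaptation of \cite[Theorem~3.7]{Ho04}; you instead unpack each condition and supply a direct argument.

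A few remarks on the comparison. Your treatment of kernels of epimorphisms via the equivalence $A\in\mathcal{GF}_C(R)\Leftrightarrow A^+\in\mathcal{GI}_C(R^{op})$ and closure of $l\mathcal{G}(\mathcal{I}_C(R^{op}))$ under cokernels of monomorphisms is precisely the mechanism behind Holm's original proof in the classical case, so here you are not really taking a different route but rather making explicit what the paper's citation hides. Your argument for extension closure via general properties of $r\mathcal{G}$ from \cite{SZ} is likewise standard. The one place your write-up is genuinely more careful than the paper is the containment of projectives: in the classical case $C={_RR_R}$ this is trivial because projectives are flat and hence lie in $\mathcal{F}(R)\subseteq\mathcal{GF}(R)$, but in the relative setting projectives are not obviously $C$-flat, so the Holm argument does not transfer verbatim. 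You are right to flag the monicity of the $(\mathcal{F}_C(R)\cap\mathcal{PI}(R))$-preenvelope of $_RR$ as the point needing justification. One clean way to handle it: $C^+\cong\Hom_{S^{op}}(C,S^+)\in\mathcal{I}_C(R^{op})$, so $C^{++}\in\mathcal{F}_C(R)\cap\mathcal{PI}(R)$ by Lemma~\ref{4.2}(3), and then the faithfulness of $C$ can be used to produce a monomorphism $_RR\hookrightarrow (C^{++})^J$ for a suitable index set $J$; alternatively, one can pass through the duality of Proposition~\ref{3.2} and argue that $({_RR})^+$, being injective in $\Mod R^{op}$, lies in $\mathcal{GI}_C(R^{op})$ (this uses that injectives belong to the Bass class and admit proper $\mathcal{I}_C$-resolutions, cf.\ \cite[Corollary~6.1, Theorem~6.2]{HW07}).
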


\begin{proof}
By using an argument similar to that in the proof of \cite[Theorem 3.7]{Ho04}, we get that
$\mathcal{GF}_C(R)$ is projectively resolving.
Now the assertion follows from Theorem \ref{4.8} and Lemma \ref{3.4}(3).
\end{proof}

By Theorem \ref{4.8} and Lemma \ref{4.7}(2), we have

\begin{corollary}\label{4.10}
If $S$ is a right coherent and left perfect ring and ${_RC_S}$ is faithfully semidualizing,
then $$(\mathcal{GP}_C(R),\mathcal{GI}_C(R^{op}))$$ is a coproduct-closed duality pair
and $\mathcal{GP}_C(R)$ is covering in $\Mod R$.
\end{corollary}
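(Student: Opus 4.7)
The statement is an essentially immediate consequence of two results already in hand, so the plan is to simply assemble them rather than redo any work. The key observation is Lemma \ref{4.7}(2): under the hypothesis that $S$ is right coherent and left perfect, one has the equality
$$\mathcal{GP}_C(R)=\mathcal{GF}_C(R)$$
of subcategories of $\Mod R$. Therefore every property established for $\mathcal{GF}_C(R)$ transfers verbatim to $\mathcal{GP}_C(R)$.

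Concretely, I would proceed as follows. First, invoke Theorem \ref{4.8}: since $S$ is right coherent and ${_RC_S}$ is faithfully semidualizing, the pair $(\mathcal{GF}_C(R),\mathcal{GI}_C(R^{op}))$ is a coproduct-closed duality pair and $\mathcal{GF}_C(R)$ is covering in $\Mod R$. Then substitute $\mathcal{GP}_C(R)$ for $\mathcal{GF}_C(R)$ using Lemma \ref{4.7}(2). This substitution is compatible with each piece of the conclusion: the defining conditions of a duality pair (closure of $\mathcal{GI}_C(R^{op})$ under finite direct sums and direct summands, together with the biconditional $X\in\mathcal{GP}_C(R)\iff X^+\in\mathcal{GI}_C(R^{op})$) depend only on the class $\mathcal{GP}_C(R)$ itself; closure under coproducts is preserved because the two classes coincide; and the covering property is a property of the subcategory, not its name.

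Since there is no real obstacle here, the only thing worth remarking on is that both hypotheses of Lemma \ref{4.7}(2), namely that $S$ is right coherent and left perfect, are built into the hypotheses of the corollary, so the chain of implications is clean. I would write the proof in one or two sentences: by Lemma \ref{4.7}(2), $\mathcal{GP}_C(R)=\mathcal{GF}_C(R)$, and then Theorem \ref{4.8} applies directly to yield both the coproduct-closed duality pair structure and the covering property.
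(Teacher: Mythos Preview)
Your proposal is correct and matches the paper's approach exactly: the paper derives the corollary immediately from Theorem~\ref{4.8} together with Lemma~\ref{4.7}(2), which is precisely the substitution argument you outline.
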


Note that for a commutative noetherian ring $R$ and a semidualizing bimodule $_RC_R$,
it was proved in \cite[Theorem 3.11]{HGD15} that $R$ is artinian if and only if
$(\mathcal{GP}_C(R),\mathcal{GI}_C(R))$ is a (coproduct-closed) duality pair.

We use $\mathcal{GF}(R)$
(resp. $\mathcal{GI}(R^{op})$) to the subcategory of $\Mod R$ (resp. $\Mod R^{op}$) consisting
Gorenstein flat (resp. injective) modules. By Theorem \ref{4.8}, we have

\begin{corollary}\label{4.11}
If $R$ is a right coherent ring,
then $$(\mathcal{GF}(R),\mathcal{GI}(R^{op}))$$ is a coproduct-closed duality pair
and $\mathcal{GF}(R)$ is covering in $\Mod R$.
\end{corollary}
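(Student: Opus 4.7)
The plan is to specialize Theorem \ref{4.8} to the trivial choice of semidualizing bimodule. Concretely, I would take $S=R$ and $C={_RR_R}$. The paper already records (right after Definition \ref{4.1}) that $_RR_R$ is a faithfully semidualizing $(R,R)$-bimodule, so the faithfulness hypothesis of Theorem \ref{4.8} is automatic. The remaining hypothesis, namely that $S$ is right coherent, translates directly into the assumption in the corollary that $R$ is right coherent.

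Next I would unwind the definitions to confirm the identifications
\[
\mathcal{F}_C(R) = \mathcal{F}(R), \qquad \mathcal{I}_C(R^{op}) = \mathcal{I}(R^{op}),
\]
since $C\otimes_S P = R\otimes_R P \cong P$ for any flat $P\in\Mod R$ and $\Hom_{S^{op}}(C,I)=\Hom_{R^{op}}(R,I)\cong I$ for any injective $I\in\Mod R^{op}$. Feeding these into Definition \ref{4.5} yields $\mathcal{GF}_C(R)=\mathcal{GF}(R)$ and $\mathcal{GI}_C(R^{op})=\mathcal{GI}(R^{op})$, as already noted in the paragraph following Definition \ref{4.5}.

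With these identifications in hand, Theorem \ref{4.8} applied to $_RR_R$ immediately gives that $(\mathcal{GF}(R),\mathcal{GI}(R^{op}))$ is a coproduct-closed duality pair and that $\mathcal{GF}(R)$ is covering in $\Mod R$. There is essentially no obstacle: the work is purely a matter of instantiating a general theorem at a specific (and automatically legal) input, so the proof should be a one- or two-line invocation of Theorem \ref{4.8} together with the standard identification of $C$-Gorenstein flat/injective modules with ordinary Gorenstein flat/injective modules when $C=R$.
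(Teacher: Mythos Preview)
Your proposal is correct and matches the paper's own approach exactly: the paper deduces Corollary \ref{4.11} directly from Theorem \ref{4.8} by taking $S=R$ and $C={_RR_R}$, using the identifications $\mathcal{GF}_C(R)=\mathcal{GF}(R)$ and $\mathcal{GI}_C(R^{op})=\mathcal{GI}(R^{op})$ noted after Definition \ref{4.5}.
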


\subsection{The pair $(\mathcal{A}_C(R^{op}),l\mathcal{G}(\mathcal{F}_C(R)))$}

In this subsection, we will give the second application of Theorem \ref{3.5}. We write
$${^{\top}{_RC}}:=\{A\in\Mod R^{op}\mid\Tor_{\geq 1}^R(A,C)=0\}\ \text{and}\
{{C_S}^{\top}}:=\{B\in\Mod S\mid\Tor_{\geq 1}^S(C,B)=0\}.$$

\begin{definition} \label{4.12}
{\rm (\cite{HW07})
\begin{enumerate}
\item[(1)] The {\bf Auslander class} $\mathcal{A}_{C}(R^{op})$ with respect to $C$ consists of all modules $N$
in $\Mod R^{op}$ satisfying the following conditions.
\begin{enumerate}
\item[(a1)] $N\in{^{\top}{_RC}}$.
\item[(a2)] $N\otimes _{R}C\in{{C_S}^{\perp}}$.
\item[(a3)] The canonical valuation homomorphism
$$\mu_N:N\rightarrow \Hom_{S^{op}}(C,N\otimes_RC)$$
defined by $\mu_N(x)(c)=x\otimes c$ for any $x\in N$ and $c\in C$ is an isomorphism in $\Mod R^{op}$.
\end{enumerate}
\item[(2)] The {\bf Bass class} $\mathcal{B}_C(R)$ with respect to $C$ consists of all modules $M$
in $\Mod R$ satisfying the following conditions.
\begin{enumerate}
\item[(b1)] $M\in{_RC^{\perp}}$.
\item[(b2)] $\Hom_R(C,M)\in{{C_S}^{\top}}$.
\item[(b3)] The canonical valuation homomorphism
$$\theta_M:C\otimes_S\Hom_R(C,M)\rightarrow M$$
defined by $\theta_M(c\otimes f)=f(c)$ for any $c\in C$ and $f\in \Hom_R(C,M)$ is an isomorphism in $\Mod R$.
\end{enumerate}
\end{enumerate}}
\end{definition}

The following two observations are useful.

\begin{lemma}\label{4.13}
$\mathcal{F}_C(R)\bot[\mathcal{I}_C(R^{op})]^+$.
\end{lemma}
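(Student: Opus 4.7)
The plan is to reduce the Ext vanishing to a Tor vanishing via Pontryagin duality, then exploit the flatness of the factor in a $C$-flat module to isolate $C$ itself. Fix $C\otimes_SF\in\mathcal{F}_C(R)$ with $F$ flat in $\Mod S$, and put $E:=\Hom_{S^{op}}(C,I)\in\mathcal{I}_C(R^{op})$ with $I$ injective in $\Mod S^{op}$. The goal is to show $\Ext_R^i(C\otimes_SF,E^+)=0$ for all $i\geq 1$.

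First, \cite[Lemma 2.16(b)]{GT12} converts Ext into the dual of a Tor:
$$\Ext_R^i(C\otimes_SF,E^+)\cong [\Tor_i^R(E,C\otimes_SF)]^+.$$
Since $F$ is flat in $\Mod S$, the associativity of Tor (\cite[Theorem 9.48]{R}) yields
$$\Tor_i^R(E,C\otimes_SF)\cong\Tor_i^R(E,C)\otimes_SF,$$
so it suffices to prove $\Tor_i^R(E,C)=0$ for $i\geq 1$.

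To handle this, I dualize once more. By \cite[Lemma 2.16(b)]{GT12}, $[\Tor_i^R(E,C)]^+\cong\Ext_R^i(C,E^+)$, and by \cite[Lemma 2.16(c)]{GT12} one has $E^+\cong C\otimes_SI^+$, where $I^+$ is flat in $\Mod S$ by \cite[Corollary 2.18(b)]{GT12}. Thus the task reduces to showing $\Ext_R^i(C,C\otimes_SJ)=0$ for every flat $_SJ$. This follows from the semidualizing axioms: by Lazard's theorem, $J$ is a filtered colimit of finitely generated free $S$-modules, so $C\otimes_SJ$ is a filtered colimit of finite coproducts of $_RC$. Since $_RC$ admits a degreewise finite $R$-projective resolution (Definition \ref{4.1}(a1)), each functor $\Ext_R^i(C,-)$ commutes with these colimits, and $\Ext_R^{\geq 1}(C,C)=0$ (Definition \ref{4.1}(c1)) then forces the desired vanishing. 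The faithfulness of $(-)^+$ upgrades this to $\Tor_i^R(E,C)=0$, completing the argument.

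The main obstacle is the very last step, which rests on the commutation of $\Ext_R^i(C,-)$ with filtered colimits; this uses the $\text{FP}_\infty$ character of $_RC$ in an essential way, namely that the resolution is finitely generated at every stage. If \cite{HW07} (or another standard reference) already records the inclusion $\mathcal{I}_C(R^{op})\subseteq\mathcal{A}_C(R^{op})$, then $\Tor_i^R(E,C)=0$ is immediate from condition (a1) of Definition \ref{4.12} and the Lazard/colimit argument becomes unnecessary, shortening the proof considerably.
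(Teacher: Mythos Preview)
Your argument is correct. The opening step (Pontryagin duality via \cite[Lemma 2.16(b)]{GT12}) is identical to the paper's, but then the paths diverge. The paper invokes \cite[Theorem 6.4(c)]{HW07} in a single stroke to identify $\Tor_i^R(\Hom_{S^{op}}(C,I),C\otimes_SF)\cong\Tor_i^S(I,F)$, which vanishes because $F$ is flat. You instead peel off $F$ by flatness (your step using \cite[Theorem 9.48]{R}), reduce to $\Tor_i^R(E,C)=0$, and then prove this by dualizing back to $\Ext_R^i(C,C\otimes_SI^+)$ and running a Lazard/$\mathrm{FP}_\infty$ argument from the semidualizing axioms. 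Your route is more self-contained---it avoids quoting the Foxby-equivalence machinery of \cite{HW07} and rebuilds exactly what is needed---at the cost of being longer. As you yourself observe, the detour through Lazard is unnecessary: the inclusion $\mathcal{I}_C(R^{op})\subseteq\mathcal{A}_C(R^{op})$ is recorded in \cite{HW07} (it is part of their basic setup for Foxby equivalence), and condition (a1) of Definition \ref{4.12} then gives $\Tor_i^R(E,C)=0$ immediately. With that shortcut your proof becomes essentially equivalent in length and spirit to the paper's.
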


\begin{proof}
Let $A\in\mathcal{F}_C(R)$ and $B\in[\mathcal{I}_C(R^{op})]^+$. Then $A=C\otimes_SF$ and $B=[\Hom_{S^{op}}(C,I)]^+$
for some flat module $F$ in $\Mod S$ and injective module $I$ in $\Mod S^{op}$. For any $i\geq 1$, we have
\begin{align*}
&\ \ \ \ \ \  \ \ \ \Ext_R^i(A,B)\\
&\ \ \ \ \ =\Ext_R^i(C\otimes_SF,[\Hom_{S^{op}}(C,I)]^+)\\
&\ \ \ \ \ \cong[\Tor_i^R(\Hom_{S^{op}}(C,I),C\otimes_SF)]^+\ \text{(by \cite[Lemma 2.16(b)]{GT12})}\\
&\ \ \ \ \ \cong[\Tor_i^S(I,F)]^+\ \text{(by \cite[Theorem 6.4(c)]{HW07})}\\
&\ \ \ \ \ =0.
\end{align*}
\end{proof}

We use $\Prod C^+$ to denote the subcategory of $\Mod R^{op}$ consisting of modules consisting of
direct summands of products of copies of $C^+$.

\begin{lemma}\label{4.14}
$\mathcal{A}_C(R^{op})=r\mathcal{G}(\mathcal{I}_C(R^{op}))$ and $\mathcal{B}_C(R)=l\mathcal{G}(\mathcal{P}_C(R))$.
\end{lemma}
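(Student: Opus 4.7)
The plan is to prove both identities by parallel arguments exploiting the adjunctions $-\otimes_R C \dashv \Hom_{S^{op}}(C,-)$ and $C\otimes_S- \dashv \Hom_R(C,-)$, together with the Foxby-type identities built into the semidualizing hypotheses. I will sketch the first equality in detail, since the second is entirely dual. Three ingredients will be used repeatedly: (i) $\Hom_{S^{op}}(C,I)\otimes_R C\cong I$ for $I$ injective in $\Mod S^{op}$ (injectives lie in the Bass class on the $S^{op}$-side); (ii) the natural isomorphism $\Ext^i_{R^{op}}(N,\Hom_{S^{op}}(C,J))\cong \Hom_{S^{op}}(\Tor^R_i(N,C),J)$ for injective $J\in\Mod S^{op}$, obtained by combining the adjunction with the exactness of $\Hom_{S^{op}}(-,J)$; (iii) naturality of the units $\mu$ and $\theta$.

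For $\mathcal{A}_C(R^{op})\subseteq r\mathcal{G}(\mathcal{I}_C(R^{op}))$, I take $N\in\mathcal{A}_C(R^{op})$, choose an injective coresolution $0\to N\otimes_R C\to I^0\to I^1\to\cdots$ in $\Mod S^{op}$, and apply $\Hom_{S^{op}}(C,-)$. Condition (a2) makes the resulting complex exact, and (a3) replaces the leading term by $N$, giving the required coresolution with $J^i:=\Hom_{S^{op}}(C,I^i)\in\mathcal{I}_C(R^{op})$. The orthogonality $N\in{^{\perp}\mathcal{I}_C(R^{op})}$ follows from (ii) and (a1). For the $\Hom_{R^{op}}(-,\mathcal{I}_C(R^{op}))$-exactness, applying $\Hom_{R^{op}}(-,\Hom_{S^{op}}(C,J))$ for injective $J$ converts, via adjunction and (i), into $\Hom_{S^{op}}(-,J)$ applied to the original injective coresolution, which is exact.

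For the reverse inclusion, given the coresolution $0\to N\to J^0\to J^1\to\cdots$ with $J^i=\Hom_{S^{op}}(C,I^i)$, I apply $-\otimes_R C$; using (i) and testing exactness in $\Mod S^{op}$ against an injective cogenerator $J$, the assumed $\Hom_{R^{op}}(-,\mathcal{I}_C(R^{op}))$-exactness translates via adjunction into exactness of $0\to N\otimes_R C\to I^0\to I^1\to\cdots$. Condition (a1) then follows from (ii) by plugging in an injective cogenerator for $J$; condition (a2) holds because $\Hom_{S^{op}}(C,I^\bullet)=J^\bullet$ has vanishing positive cohomology by the given coresolution exactness; and (a3) is obtained from the commutative square comparing $0\to N\to J^0\to J^1$ with $0\to\Hom_{S^{op}}(C,N\otimes_R C)\to J^0\to J^1$, using that $\mu_{J^i}$ is an isomorphism (each $J^i$, being $\Hom_{S^{op}}(C,-)$ of an injective, lies in $\mathcal{A}_C(R^{op})$).

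The Bass equality $\mathcal{B}_C(R)=l\mathcal{G}(\mathcal{P}_C(R))$ proceeds dually. In the forward direction, take a projective resolution $P_\bullet\to\Hom_R(C,M)\to 0$ in $\Mod S$ and apply $C\otimes_S-$, using (b2) to preserve exactness and (b3) to identify the cokernel with $M$; $\Hom_R(\mathcal{P}_C(R),-)$-exactness reduces, via adjunction and the identity $\Hom_R(C,C\otimes_S Q)\cong Q$ for projective $Q$ (which uses finite generation of $_RC$), to $\Hom_S(Q,-)$-exactness, automatic since $Q$ is projective. Conversely, applying $\Hom_R(C,-)$ to an $l\mathcal{G}(\mathcal{P}_C(R))$-resolution of $M$ produces a projective resolution of $\Hom_R(C,M)$ in $\Mod S$; applying $C\otimes_S-$ back and comparing with the original resolution via naturality of $\theta$ (which is the identity on each $C\otimes_S P_i$) identifies $M\cong C\otimes_S\Hom_R(C,M)$ by cokernel uniqueness (giving (b3)) and shows $\Tor^S_{\geq 1}(C,\Hom_R(C,M))=0$ (giving (b2)), while (b1) is immediate from the orthogonality. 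The main delicacy will be bookkeeping the various Foxby-type identities on injective and projective summands, but each reduces directly to the defining conditions of a semidualizing bimodule.
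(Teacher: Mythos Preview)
Your argument is correct, and it takes a genuinely different route from the paper's proof. The paper does not prove the lemma from scratch: it first observes $\mathcal{I}_C(R^{op})=\Prod C^+$ (citing \cite{LHX13}) and uses the standard duality $[\Tor_i^R(-,C)]^+\cong\Ext^i_{R^{op}}(-,C^+)$ to rewrite ${^{\top}{_RC}}$ as ${^{\bot}\mathcal{I}_C(R^{op})}$, and then invokes \cite[Theorem~3.11(1)]{TH2} for the first equality and \cite[Theorem~3.9]{TH1} for the second, so the substance is outsourced. Your approach, by contrast, unwinds the Foxby adjunctions directly: you build the $\mathcal{I}_C(R^{op})$-coresolution of $N$ by applying $\Hom_{S^{op}}(C,-)$ to an injective coresolution of $N\otimes_R C$, and you verify both $\Hom_{R^{op}}(-,\mathcal{I}_C(R^{op}))$-exactness and the three Auslander-class conditions via the adjunction isomorphism $\Hom_{R^{op}}(-,\Hom_{S^{op}}(C,J))\cong\Hom_{S^{op}}((-)\otimes_R C,J)$ together with the triangle identities; the Bass equality is handled dually. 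The advantage of your argument is that it is self-contained and makes transparent exactly which semidualizing properties are being used (in particular, that injectives in $\Mod S^{op}$ lie in the Bass class and projectives in $\Mod S$ lie in the Auslander class, so the relevant units and counits are isomorphisms); the advantage of the paper's approach is brevity, since the cited results from \cite{TH1,TH2} already package these computations.
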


\begin{proof}
By \cite[Proposition 2.4(2)]{LHX13}, we have $\mathcal{I}_C(R^{op})=\Prod C^+$.
Then by \cite[Lemma 2.16(b)]{GT12}, it is easy to get
$${^{\top}{_RC}}={^{\bot}(C^+)}={^{\bot}(\Prod C^+)}={^{\bot}\mathcal{I}_C(R^{op})}.$$
So $\mathcal{A}_C(R^{op})=r\mathcal{G}(\mathcal{I}_C(R^{op}))$ by \cite[Theorem 3.11(1)]{TH2}.
On the other hand, by \cite[Theorem 3.9]{TH1}, we have $\mathcal{B}_C(R)=l\mathcal{G}(\mathcal{P}_C(R))$.
\end{proof}

We are ready to prove the following

\begin{theorem}\label{4.15}
If $S$ is a right coherent ring,
then $$(\mathcal{A}_C(R^{op}),l\mathcal{G}(\mathcal{F}_C(R)))$$ is a coproduct-closed and product-closed duality pair
and $\mathcal{A}_C(R^{op})$ is covering and preenveloping in $\Mod R^{op}$.
\end{theorem}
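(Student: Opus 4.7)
The plan is to apply Theorem \ref{3.5} with $R$ replaced by $R^{op}$, taking $\C=\mathcal{I}_C(R^{op})\subseteq\Mod R^{op}$ and $\mathscr{D}=\mathcal{F}_C(R)\subseteq\Mod R$. Once the hypotheses are in place, Lemma \ref{4.14} identifies $r\mathcal{G}(\mathcal{I}_C(R^{op}))$ with the Auslander class $\mathcal{A}_C(R^{op})$, so Theorem \ref{3.5} will immediately produce the duality pair $(\mathcal{A}_C(R^{op}),l\mathcal{G}(\mathcal{F}_C(R)))$; the covering and preenveloping statements will then follow provided $\mathcal{A}_C(R^{op})$ is closed under both coproducts and products.

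First I would verify conditions (a), (b), (c) of Proposition \ref{3.2}. For (a), the inclusion $\mathcal{F}_C(R)^{+}\subseteq \mathcal{I}_C(R^{op})$ is Lemma \ref{4.2}(1), and $\mathcal{I}_C(R^{op})^{+}\subseteq \mathcal{F}_C(R)$ is Lemma \ref{4.2}(2), which uses the right coherence of $S$. For (b), I would exploit the identification $\mathcal{I}_C(R^{op})=\Prod C^{+}$ noted in the proof of Lemma \ref{4.14}: since character modules are pure injective by \cite[Theorem 2.27]{GT12}, every module in $\Prod C^{+}$ is pure injective, so all modules in $\mathcal{I}_C(R^{op})$ are pure injective. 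For the preenveloping property I would argue as in Proposition \ref{4.4}(3) on the opposite side: given $N\in\Mod R^{op}$, embed $N\otimes_R C$ into an injective $I\in \Mod S^{op}$ and apply the Hom-tensor adjunction to produce a canonical morphism $N\to\Hom_{S^{op}}(C,I)\in\mathcal{I}_C(R^{op})$ which, together with the universal property of the embedding, yields an $\mathcal{I}_C(R^{op})$-preenvelope (this is essentially built into \cite[Proposition 5.3]{HW07}). Condition (c), namely $\mathcal{F}_C(R)\bot\mathcal{I}_C(R^{op})^{+}$, is exactly Lemma \ref{4.13}.

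With (a)–(c) established, Theorem \ref{3.5} gives that $(\mathcal{A}_C(R^{op}),l\mathcal{G}(\mathcal{F}_C(R)))$ is a duality pair. To upgrade it, I must show that $\mathcal{A}_C(R^{op})$ is closed under arbitrary coproducts and products, and then invoke the second half of Theorem \ref{3.5}. Closure under coproducts is routine: since $C$ is degreewise finitely presented over $S^{op}$, the functor $\Hom_{S^{op}}(C,-)$ commutes with coproducts, while $-\otimes_R C$ and $\Tor_i^R(-,C)$ always do; so each of the conditions (a1)–(a3) of Definition \ref{4.12} is preserved under coproducts of modules in $\mathcal{A}_C(R^{op})$. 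Closure under products is the delicate point, since tensor products do not in general commute with direct products; I would use that $C$ admits a degreewise finite $R$-projective resolution, apply it to a truncated complex, and deduce that $\Tor^R_i(-,C)$ and the natural map $\mu$ commute with products, verifying (a1) and (a3); condition (a2) is immediate because $\Hom_R(C,-)$ respects products and $C$ has a degreewise finite projective resolution over $R$.

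The main obstacle is the product-closure of $\mathcal{A}_C(R^{op})$, because the interaction between arbitrary products and the tensor product $-\otimes_R C$ requires an argument relying on the finite-presentation data built into the definition of a semidualizing bimodule. Once this closure is secured, Theorem \ref{3.5} directly yields that $\mathcal{A}_C(R^{op})$ is both covering and preenveloping in $\Mod R^{op}$, finishing the proof in exactly the pattern of Theorem \ref{4.8}.
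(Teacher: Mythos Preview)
Your proposal is correct and follows essentially the same route as the paper: apply Theorem \ref{3.5} with $\C=\mathcal{I}_C(R^{op})$ and $\mathscr{D}=\mathcal{F}_C(R)$, verify (a)--(c) via Lemma \ref{4.2}, the identification $\mathcal{I}_C(R^{op})=\Prod C^+$, \cite[Proposition 5.3(c)]{HW07}, and Lemma \ref{4.13}, and then invoke Lemma \ref{4.14}. The only difference is that the paper dispatches closure of $\mathcal{A}_C(R^{op})$ under products and coproducts by citing \cite[Proposition 4.2(a)]{HW07} directly rather than sketching the argument as you do (your justification of (a2) under products is slightly garbled---it involves $\Ext_{S^{op}}(C,-)$, not $\Hom_R(C,-)$---but the intended argument is fine).
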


\begin{proof}
We have
\begin{enumerate}
\item[(a)] $[\mathcal{I}_C(R^{op})]^+\subseteq \mathcal{F}_C(R)$ and
$[\mathcal{F}_C(R)]^+\subseteq \mathcal{I}_C(R^{op})$ by Lemma \ref{4.2}.
\item[(b)] By \cite[Proposition 5.3(c)]{HW07}, $\mathcal{I}_C(R^{op})$ is preenveloping in $\Mod R^{op}$.
Because $\mathcal{I}_C(R^{op})=\Prod C^+$ by \cite[Proposition 2.4(2)]{LHX13},
it follows from \cite[Theorem 2.27]{GT12} that all modules in $\mathcal{I}_C(R^{op})$ are pure injective.
\item[(c)] By Lemma \ref{4.13}, we have $\mathcal{F}_C(R)\bot[\mathcal{I}_C(R^{op})]^+$.
\end{enumerate}
Moreover, $\mathcal{A}_C(R^{op})$ is closed under coproducts and products by \cite[Proposition 4.2(a)]{HW07}.
It follows from Lemma \ref{4.14} and Theorem \ref{3.5} that
$$(\mathcal{A}_C(R^{op}),l\mathcal{G}(\mathcal{F}_C(R)))$$ is a coproduct-closed and product-closed duality pair
and $\mathcal{A}_C(R^{op})$ is covering and preenveloping in $\Mod R^{op}$.
\end{proof}

The following corollary was proved in \cite[Theorem 3.11]{EH} when $R$ is a commutative noetherian ring and $_RC_S={_RC_R}$.

\begin{corollary}\label{4.16}
If $S$ is a right coherent ring,
then $$(\mathcal{A}_C(R^{op}),\mathcal{A}_C(R^{op})^{\bot})$$ is a hereditary perfect cotorsion pair
and $\mathcal{A}_C(R^{op})$ is covering and preenveloping in $\Mod R^{op}$.
\end{corollary}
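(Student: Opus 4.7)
The plan is to deduce the statement from Theorem~\ref{4.15} by applying Lemma~\ref{3.4}(3). Theorem~\ref{4.15} already supplies the coproduct-closed duality pair $(\mathcal{A}_C(R^{op}),l\mathcal{G}(\mathcal{F}_C(R)))$ and the covering/preenveloping properties of $\mathcal{A}_C(R^{op})$, so only two ingredients remain: showing that this duality pair is \emph{perfect} in the sense of Definition~\ref{3.3}(3) (so that Lemma~\ref{3.4}(3) produces the perfect cotorsion pair $(\mathcal{A}_C(R^{op}),\mathcal{A}_C(R^{op})^{\bot})$), and then upgrading that cotorsion pair to a \emph{hereditary} one.

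For perfection I have to verify (i) that $R\in\mathcal{A}_C(R^{op})$ and (ii) that $\mathcal{A}_C(R^{op})$ is closed under extensions. For (i), inspecting Definition~\ref{4.12}(1) with $N=R_R$: since $R\otimes_R C\cong C$, the three conditions reduce to $\Tor^R_{\geq1}(R,C)=0$ (automatic), $\Ext_S^{\geq1}(C,C)=0$ (which is axiom (c2) of semidualizing), and the homothety $R\to\Hom_{S^{op}}(C,C)$ being an isomorphism (axiom (b1)). For (ii), given a short exact sequence $0\to N'\to N\to N''\to 0$ in $\Mod R^{op}$ with outer terms in $\mathcal{A}_C(R^{op})$, the long exact sequences of $\Tor^R(-,C)$ and $\Ext_S(-\otimes_R C,C)$ plus the five lemma applied to the natural transformation $\mu_{(-)}$ between the short exact sequences $0\to N'\otimes_R C\to N\otimes_R C\to N''\otimes_R C\to 0$ and $0\to\Hom_{S^{op}}(C,N'\otimes_R C)\to\Hom_{S^{op}}(C,N\otimes_R C)\to\Hom_{S^{op}}(C,N''\otimes_R C)\to 0$ force $N\in\mathcal{A}_C(R^{op})$; alternatively this is recorded in \cite[Proposition 4.2(a)]{HW07}, or it follows from Lemma~\ref{4.14} together with the standard closure properties of $r\mathcal{G}(-)$ in Definition~\ref{2.2}.

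For the hereditary conclusion I will verify condition (3.2) of Definition~\ref{2.4}: $\mathcal{A}_C(R^{op})$ is resolving. It contains all projectives because $R\in\mathcal{A}_C(R^{op})$ and the class is closed under coproducts (Theorem~\ref{4.15}) and direct summands (from the duality pair, since $l\mathcal{G}(\mathcal{F}_C(R))$ is closed under direct summands by \cite[Theorem 4.6(1)]{Hu} together with Proposition~\ref{3.2}). Extension-closure was already step (ii). Closure under kernels of epimorphisms is the same diagram chase shifted one step: given $0\to K\to N\to N''\to 0$ with $N,N''\in\mathcal{A}_C(R^{op})$, the vanishing $\Tor_1^R(N'',C)=0$ keeps the sequence $0\to K\otimes_R C\to N\otimes_R C\to N''\otimes_R C\to 0$ short exact, and the analogous long-exact-sequence / five-lemma argument then places $K$ in $\mathcal{A}_C(R^{op})$. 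The main obstacle I anticipate is precisely this last five-lemma step for condition (a3) of Definition~\ref{4.12}, which requires a careful book-keeping of the $\Tor$-vanishing so that the comparison diagram remains short exact on both rows; once this is in place, the rest consists of bookkeeping from the semidualizing axioms and routine citations.
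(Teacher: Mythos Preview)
Your proposal is correct and follows essentially the same route as the paper: invoke Theorem~\ref{4.15} for the coproduct-closed duality pair and the covering/preenveloping claims, then feed $R\in\mathcal{A}_C(R^{op})$ and extension-closure into Lemma~\ref{3.4}(3) for the perfect cotorsion pair, and finally use the resolving property of $\mathcal{A}_C(R^{op})$ for hereditariness. The paper compresses all of your verifications (containing $R$, closure under extensions, closure under kernels of epimorphisms) into the single citation \cite[Theorem~6.2]{HW07}, which asserts directly that $\mathcal{A}_C(R^{op})$ is projectively resolving; everything else is identical. One small slip: in your extension-closure sketch the relevant Ext is $\Ext_{S^{op}}^{\geq 1}(C,\,-\otimes_R C)$ (condition (a2) of Definition~\ref{4.12}), not $\Ext_S(-\otimes_R C,\,C)$, but your alternative citations already cover this.
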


\begin{proof}
By \cite[Theorem 6.2]{HW07}, $\mathcal{A}_C(R^{op})$ is projectively resolving. Now the assertion follows from
Theorem \ref{4.15} and Lemma \ref{3.4}(3).
\end{proof}

As a consequence of Theorem \ref{4.15}, we also have the following

\begin{corollary}\label{4.17}
If $S$ is a right coherent and left perfect ring,
then $$(\mathcal{A}_C(R^{op}),\mathcal{B}_C(R))$$ is a coproduct-closed and product-closed duality pair.
\end{corollary}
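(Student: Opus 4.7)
The plan is to reduce Corollary 4.17 to Theorem 4.15 by identifying $l\mathcal{G}(\mathcal{F}_C(R))$ with $\mathcal{B}_C(R)$ under the extra hypothesis that $S$ is left perfect. Since Theorem 4.15 already delivers the pair $(\mathcal{A}_C(R^{op}),l\mathcal{G}(\mathcal{F}_C(R)))$ as a coproduct-closed and product-closed duality pair whenever $S$ is right coherent, the whole task is to verify that adding ``$S$ is left perfect'' collapses $l\mathcal{G}(\mathcal{F}_C(R))$ onto $l\mathcal{G}(\mathcal{P}_C(R))=\mathcal{B}_C(R)$.

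First I would invoke the classical fact (used already inside the proof of Lemma \ref{4.7}(2), via \cite[Theorem 28.4]{AF}) that if $S$ is left perfect then every flat module in $\Mod S$ is projective. Tensoring with $_RC_S$ on the left turns a flat $S$-module into a $C$-flat $R$-module and a projective $S$-module into a $C$-projective $R$-module, so one obtains the equality $\mathcal{F}_C(R)=\mathcal{P}_C(R)$ exactly as in that earlier proof. This is the only place the left-perfect hypothesis is used; right coherence of $S$ is not needed for this identification but is needed to apply Theorem \ref{4.15}.

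Next I would apply this equality of subcategories to the operator $l\mathcal{G}(-)$; since this operator depends only on the underlying subcategory, one immediately gets $l\mathcal{G}(\mathcal{F}_C(R))=l\mathcal{G}(\mathcal{P}_C(R))$. By Lemma \ref{4.14} the right-hand side is exactly $\mathcal{B}_C(R)$. Substituting into the duality pair statement produced by Theorem \ref{4.15} then yields that $(\mathcal{A}_C(R^{op}),\mathcal{B}_C(R))$ is a coproduct-closed and product-closed duality pair, which is the desired conclusion.

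There is no real obstacle here; the statement is a direct specialization of Theorem \ref{4.15}. If anything merits a sentence of care, it is the verification that the hypotheses of Theorem \ref{4.15} are indeed in force, namely right coherence of $S$, so that the cited theorem may be applied, and that the identification $\mathcal{F}_C(R)=\mathcal{P}_C(R)$ really follows from $S$ being left perfect through the $C\otimes_S-$ construction (no appeal to semidualizing structure beyond what is already standing in the paper's hypotheses is needed).
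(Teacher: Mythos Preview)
Your proposal is correct and follows essentially the same route as the paper: the paper's proof also uses Lemma~\ref{4.14} together with the equality $\mathcal{P}_C(R)=\mathcal{F}_C(R)$ (available since $S$ is left perfect) to identify $\mathcal{B}_C(R)=l\mathcal{G}(\mathcal{P}_C(R))=l\mathcal{G}(\mathcal{F}_C(R))$, and then invokes Theorem~\ref{4.15}. Your write-up is simply more explicit about why $\mathcal{P}_C(R)=\mathcal{F}_C(R)$ holds, but the argument is the same.
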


\begin{proof}
Let $S$ is a right coherent and left perfect ring. By Lemma \ref{4.14}, we have
$$\mathcal{B}_C(R)=l\mathcal{G}(\mathcal{P}_C(R))=l\mathcal{G}(\mathcal{F}_C(R)).$$
Now the assertion follows from Theorem \ref{4.15}.
\end{proof}

\vspace{0.5cm}

{\bf Acknowledgements.}
This research was partially supported by NSFC (Grant No. 11571164) and the NSF of Shandong Province (Grant No. ZR2019QA015).
The authors thank the referees for the useful suggestions.

\end{document}